\definecolor{darkblue}{rgb}{0,0.08,0.45}
\definecolor{darkred}{RGB}{163, 28, 0}
\newcommand{\R}{\mathbb{R}}
\newcommand{\N}{\mathbb{N}}
\newcommand{\T}{\theta}
\newcommand{\bT}{\bar{\T}}
\newcommand{\la}{\lambda}
\newcommand{\ep}{\epsilon}
\newcommand{\sig}{\sigma}
\newcommand{\al}{\alpha}
\newcommand{\bal}{\bar{\al}}
\newcommand{\del}{\delta}
\newcommand{\ga}{\gamma}
\newcommand{\cA}{\mathcal{A}}
\newcommand{\cM}{\mathcal{M}}
\newcommand{\cN}{\mathcal{N}}
\newcommand{\norm}[1]{\left\lVert#1\right\rVert}
\newcommand{\abs}[1]{\left\lvert#1\right\rvert}
\renewcommand{\t}{T}
\newcommand{\id}{\mathrm{id}}
\DeclareMathOperator{\diag}{diag}
\DeclareMathOperator{\Spec}{Spec}
\renewcommand{\Re}{\operatorname{Re}}
\DeclareMathOperator{\Tr}{Tr}
\DeclareMathOperator{\sign}{sign}
\newtheorem{Theorem}{Theorem}
\newtheorem{Definition}{Definition}
\newtheorem{Proposition}{Proposition}
\newtheorem{Corollary}{Corollary}
\newtheorem*{Proposition*}{Proposition}
\newtheorem*{Theorem*}{Theorem}
\newtheorem*{Lemma*}{Lemma}
\providecommand{\customgenericname}{}
\newcommand{\newcustomtheorem}[2]{%
  \newenvironment{#1}[1]
  {%
   \renewcommand\customgenericname{#2}%
   \renewcommand\theinnercustomgeneric{##1}%
   \innercustomgeneric
  }
  {\endinnercustomgeneric}
}
\title{On the Impossibility of Global Convergence \\ in Multi-Loss Optimization}
\author{Alistair Letcher \\ \href{https://aletcher.github.io}{aletcher.github.io}
}
\begin{document}

\maketitle

\begin{abstract}
Under mild regularity conditions, gradient-based methods converge globally to a critical point in the single-loss setting. This is known to break down for vanilla gradient descent when moving to \emph{multi-loss} optimization, but can we hope to build some algorithm with global guarantees? We negatively resolve this open problem by proving that desirable convergence properties cannot simultaneously hold for \emph{any} algorithm. Our result has more to do with the existence of games with no satisfactory outcomes, than with algorithms per se. More explicitly we construct a two-player game with zero-sum interactions whose losses are both coercive and analytic, but whose only simultaneous critical point is a strict maximum. Any `reasonable' algorithm, defined to avoid strict maxima, will therefore fail to converge. This is fundamentally different from single losses, where coercivity implies existence of a global minimum.  Moreover, we prove that a wide range of existing gradient-based methods almost surely have bounded but non-convergent iterates in a constructed \emph{zero-sum} game for suitably small learning rates. It nonetheless remains an open question whether such behavior can arise in high-dimensional games of interest to ML practitioners, such as GANs or multi-agent RL.
\end{abstract}

\section{Introduction}

\paragraph{Problem Setting.} As multi-agent architectures proliferate in machine learning, it is becoming increasingly important to understand the dynamics of gradient-based methods when optimizing multiple interacting goals, otherwise known as \emph{differentiable games}. This framework encompasses GANs \citep{Goo}, intrinsic curiosity \citep{Pat}, imaginative agents \citep{Rac}, synthetic gradients \citep{Jad}, hierarchical reinforcement learning \citep{Way, Vez} and multi-agent RL in general \citep{Bu}. The interactions between learning agents make for vastly more complex mechanics: naively applying gradient descent on each loss simultaneously is known to diverge even in simple bilinear games.

\paragraph{Related Work.} A large number of methods have recently been proposed to alleviate the failings of simultaneous gradient descent: adaptations of single-loss algorithms such as Extragradient (EG) \citep{Azi} and Optimistic Mirror Descent (OMD) \citep{Das}, Alternating Gradient Descent (AGD) for finite regret \citep{Bai}, Consensus Optimization (CO) for GAN training \citep{Mes}, Competitive Gradient Descent (CGD) based on solving a bilinear approximation of the loss functions \citep{Sch}, Symplectic Gradient Adjustment (SGA) based on a novel decomposition of game mechanics \citep{Bal, Let2}, and opponent-shaping algorithms including Learning with Opponent-Learning Awareness (LOLA) \citep{Foe} and its convergent counterpart, Stable Opponent Shaping (SOS) \citep{Let}. Let $\cA$ be this set of algorithms.

Each has shown promising theoretical implications and empirical results, but none offers insight into global convergence in the \emph{non-convex} setting, which includes the vast majority of machine learning applications. One of the main roadblocks compared with single-loss optimization has been noted by \cite{Sch}: ``a convergence proof in the nonconvex case analogue to \cite{Lee} is still out of reach in the competitive setting. A major obstacle to this end is the identification of a suitable measure of progress (which is given by the function value in the single agent setting), since norms of gradients can not be expected to decay monotonously for competitive dynamics in non-convex-concave games.''

It has been established that Hamiltonian Gradient Descent converges in two-player zero-sum games under a ``sufficiently bilinear'' condition by \citet{Abe}, but this algorithm is unsuitable for optimization as it cannot distinguish between minimization and maximization \citep[Appendix C.4]{Hsi}. Global convergence has also been established for some algorithms in a few special cases: potential and Hamiltonian games \citep{Bal}, zero-sum games satisfying the two-sided Polyak-Łojasiewicz condition \citep{Yan}, zero-sum linear quadratic games \citep{Zha2} and zero-sum games whose loss and first three derivatives are bounded \citep{Man}. These are significant contributions with several applications of interest, but do not include any of the architectures mentioned above. Finally, \citet{Bal2} show that GD dynamics are bounded under a `negative sentiment' assumption in smooth markets, which do include GANs -- but this does not imply convergence, as we will show.

On the other hand, \emph{failure} of global convergence has been shown for the Multiplicative Weights Update method by \citet{Pal}, for policy-gradient algorithms by \citet{Maz2}, and for simultaneous and alternating gradient descent (simGD and AGD) by \citet{Vla, Bai}, with interesting connections to Poincaré recurrence. Nonetheless, nothing is claimed about other optimization methods. \citet{Far} show that GANs may have no Nash equilibria, but it does \emph{not} follow that algorithms fail to converge since there may be locally-attracting but non-Nash critical points \citep[Example 2]{Maz}.

Finally, \citet{Hsi} uploaded a preprint just after the completion of this work with a similar focus to ours. They prove that generalized Robbins-Monro schemes may converge with arbitrarily high probability to spurious attractors. This includes simGD, AGD, stochastic EG, optimistic gradient and Kiefer-Wolfowitz. However, \citet{Hsi} focus on the possible occurrence of undesirable convergence phenomena for \emph{stochastic} algorithms. We instead prove that desirable convergence properties cannot simultaneously hold for all algorithms (including \emph{deterministic}). Moreover, their results apply only to decreasing step-sizes whereas ours include constant step-sizes. These distinctions are further highlighted by \citet{Hsi} in the further related work section. Taken together, our works give a fuller picture of the failure of global convergence in multi-loss optimization.

\paragraph{Contribution.} We prove that global convergence in multi-loss optimization is fundamentally \mbox{incompatible} with the `reasonable' requirement that algorithms avoid strict maxima and converge only to critical points. We construct a two-player game with zero-sum interactions whose losses are coercive and analytic, but whose only critical point is a strict maximum (Theorem \ref{th_global}). Reasonable algorithms must either diverge to infinite losses or cycle (bounded non-convergent iterates).

One might hope that global convergence could at least be guaranteed in games with strict \emph{minima} and no other critical points. On the contrary we show that strict minima can have arbitrarily small regions of attraction, in the sense that reasonable algorithms will fail to converge there with arbitrarily high probability for fixed initial parameter distribution (Theorem \ref{th_global_min}).

Finally, restricting the game class even further, we construct a \emph{zero-sum} game in which all algorithms in $\cA$ (as defined in Appendix \ref{app_algos}) are proven to cycle (Theorem \ref{prop_zerosum}).

It may be that cycles do not arise in high-dimensional games of interest including GANs. Proving or disproving this is an important avenue for further research, but requires that we recognise the impossibility of global guarantees in the first place.

\section{Background}

\subsection{Single losses: global convergence of gradient descent}

Given a continuously differentiable function $f : \R^d \to \R$, let 
\[ \T_{k+1} = \T_k-\al \nabla f(\T_k) \]
be the iterates of gradient descent with learning rate $\al$, initialised at $\T_0$. Under standard regularity conditions, gradient descent converges globally to critical points:

\begin{Proposition}\label{prop_gd}
Assume $f \in C^2$ has compact sublevel sets and is either analytic or has isolated critical points. For any $\T_0 \in \R^d$, define $U_0 = \{ f(\T) \leq f(\T_0) \}$ and let $L < \infty$ be a Lipschitz constant for $\nabla f$ in $U_0$. Then for any $0 < \al < 2/L$ we have $\lim_k \T_k = \bT$ for some critical point $\bT$.
\end{Proposition}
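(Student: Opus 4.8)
The plan is to combine the classical sufficient-decrease estimate for gradient descent with a limit-point argument, branching into the two regularity cases only at the very end. First I would apply the descent lemma along the segments $[\T_k,\T_{k+1}]$: it gives $f(\T_{k+1}) \leq f(\T_k) - \al\left(1 - \tfrac{\al L}{2}\right)\norm{\nabla f(\T_k)}^2$, and since $0 < \al < 2/L$ the constant $c := \al\left(1 - \tfrac{\al L}{2}\right)$ is strictly positive. Hence $f(\T_k)$ is non-increasing, so an easy induction shows $\T_k \in U_0$ for all $k$; by compactness of $U_0$ the sequence $(\T_k)$ is bounded and $f(\T_k) \downarrow f_\infty$ for some $f_\infty \in \R$. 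Telescoping the sufficient-decrease inequality yields $c\sum_k \norm{\nabla f(\T_k)}^2 \leq f(\T_0) - f_\infty < \infty$, so $\nabla f(\T_k) \to 0$ and therefore $\norm{\T_{k+1} - \T_k} = \al\norm{\nabla f(\T_k)} \to 0$.

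Next I would study the set $\Omega$ of accumulation points of $(\T_k)$. It is nonempty and compact by boundedness; it consists entirely of critical points, since $\nabla f$ is continuous and $\nabla f(\T_k) \to 0$; and it is connected, because consecutive iterates become arbitrarily close, a standard fact about bounded sequences with vanishing increments. In the case where $f$ has isolated critical points, $\Omega$ is then a connected subset of a discrete set, hence a singleton $\{\bT\}$, and this already gives $\T_k \to \bT$.

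For the analytic case I would invoke the Łojasiewicz gradient inequality at an arbitrary $\bT \in \Omega$ (noting $f(\bT) = f_\infty$ by continuity): there exist $\rho > 0$, $C > 0$ and $\mu \in (0,\tfrac12]$ with $\abs{f(\T) - f(\bT)}^{1-\mu} \leq C\norm{\nabla f(\T)}$ for all $\T \in B(\bT,\rho)$. Combining this with the sufficient-decrease inequality and the concavity of $t \mapsto t^{\mu}$ bounds each increment $\norm{\T_{k+1} - \T_k}$, for iterates lying in $B(\bT,\rho)$, by a fixed multiple of the telescoping difference $(f(\T_k) - f_\infty)^{\mu} - (f(\T_{k+1}) - f_\infty)^{\mu}$. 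A careful induction then shows that once the trajectory enters a small enough ball around $\bT$ it never leaves and its remaining length is finite; since $\bT \in \Omega$ the trajectory does enter every such ball, so $\sum_k \norm{\T_{k+1} - \T_k} < \infty$, the sequence is Cauchy, and $\T_k \to \bT$. This is essentially the Absil--Mahony--Andrzejewski argument.

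The routine content is everything through the isolated-critical-point case. The main obstacle is the bookkeeping in the analytic case: one must choose the entry ball small enough that the length estimate closes on itself, i.e. that the iterates cannot escape the region where the Łojasiewicz inequality is valid before the length bound is established, and one must separately dispatch the degenerate possibility that $\nabla f(\T_k) = 0$ at some finite $k$, in which case the iteration is stationary and convergence is immediate.
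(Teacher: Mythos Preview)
Your overall strategy is sound, and once the iterates are confined to $U_0$ your limit-point and Łojasiewicz arguments are correct and in fact more self-contained than the paper's, which simply cites standard convergence results and applies them to $f|_{U_0}$. However, you have glossed over the one step that is genuinely nontrivial here and that the paper's proof is entirely devoted to: the ``easy induction'' showing $\T_k \in U_0$.

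The descent lemma you invoke requires the Lipschitz bound $L$ to hold along the whole segment $[\T_k,\T_{k+1}]$, but $L$ is only a Lipschitz constant for $\nabla f$ on $U_0$, and $U_0$ need not be convex. Knowing $\T_k \in U_0$ does not guarantee that this segment stays in $U_0$, so the sufficient-decrease inequality $f(\T_{k+1}) \leq f(\T_k) - c\norm{\nabla f(\T_k)}^2$ is not yet justified and the induction is circular as written. The paper addresses exactly this point: it introduces $U_\al = \{\T - t\al\nabla f(\T) : t \in [0,1],\, \T \in U_0\}$, sets $L(\al) = \sup_{U_\al}\norm{\nabla^2 f}$, and proves by a continuity argument that $\al L < 2$ (with $L = L(0)$) forces $U_\al = U_0$, so the relevant segments never leave the region where the bound is valid. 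This is the technical heart of the proposition---indeed its whole purpose over the standard global-Lipschitz statement---and should not be dismissed as routine.
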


The requirements for convergence are relatively mild:
\begin{enumerate}
\item $f$ has compact sublevel sets iff $f$ is coercive, $\lim_{\norm{\T} \to \infty} f(\T) = \infty$, which mostly holds in machine learning since $f$ is a loss function.
\item $f$ has isolated critical points if it is a Morse function (nondegenerate Hessian at critical points), which holds for almost all $C^2$ functions. More precisely, Morse functions form an open, dense subset of all functions $f \in C^2(\R^d, \R)$ in the Whitney $C^2$-topology.
\item Global Lipschitz continuity is \emph{not} assumed, which would fail even for cubic polynomials.
\end{enumerate}

The goal of this paper is to prove that similar (even weaker) guarantees \emph{cannot} be obtained in the multi-loss setting -- not only for GD, but for any reasonable algorithm. This has to do with the more complex nature of gradient vector fields arising from multiple losses.

\subsection{Differentiable games}

Following \citet{Bal}, we frame the problem of multi-loss optimization as a differentiable game among cooperating and competing agents/players. These may simply be different internal components of a single system, like the generator and discriminator in GANs.

\begin{Definition}
A \emph{differentiable game} is a set of $n$ agents with parameters $\T = (\T^1, \ldots, \T^n) \in \R^{d}$ and twice continuously differentiable losses $L^i : \R^d \rightarrow \R$, where $\T^i \in \R^{d_i}$ for each $i$ and $\sum_i d_i = d$.
\end{Definition}

Losses are \emph{not} assumed to be convex/concave in any of the parameters. In practice, losses need only be differentiable almost-everywhere: think of neural nets with rectified linear units.

If $n=1$, the `game' is simply to minimise a given loss function. We write $\nabla_{i}L^k = \nabla_{\T^i} L^k$ and $\nabla_{ij} L^k = \nabla_{\T^j} \nabla_{\T^i} L^k$ for any $i,j,k$, and define the simultaneous gradient of the game
\[ \xi = \left( \nabla_{1} L^1, \ldots, \nabla_{n} L^n \right)^\t \in \R^d \]
as the concatenation of each player's gradient. If each agent independently minimises their loss using GD with learning rate $\al$, the parameter update for all agents is given by $\T \leftarrow \T - \al \xi(\T)$. We call this simultaneous gradient descent (simGD), or GD for short. We call $\bT$ a \emph{critical point} if $\xi(\bT) = 0$. Now introduce the `Hessian' (or Jacobian) of the game as the block matrix
\[ H = \nabla \xi = \begin{pmatrix}
\nabla_{11} L^1 & \cdots & \nabla_{1n} L^1 \\
\vdots & \ddots & \vdots \\
\nabla_{n1} L^n & \cdots & \nabla_{nn} L^n
\end{pmatrix} \in \R^{d\times d} \,. \]
Importantly note that $H$ is not symmetric in general unless $n = 1$, in which case we recover the usual Hessian $H = \nabla^2 L$. However $H$ can be decomposed into symmetric and anti-symmetric components as $H = S+A$ \citep{Bal}. A second useful decomposition has appeared recently in \citep{Let} and \citep{Sch}: $H = H_d + H_o$ where $H_d$ and $H_o$ are the matrices of diagonal and off-diagonal blocks; formally, $H_d = \bigoplus_i \nabla_{ii} L^i$. One solution concept for differentiable games, analogous to the single-loss case, is defined as follows.

\begin{Definition}
A critical point $\bT$ is a (strict, local) \emph{minimum} if $H(\bT) \succ 0$.\footnote{For non-symmetric matrices, positive definiteness is defined as $H \succ 0 $ iff $u^T H u > 0$ for all non-zero $u \in \R^d$. This is equivalent to the symmetric part $S$ of $H$ being positive definite.}
\end{Definition}

These were named (strict) stable fixed points by \citet{Bal}, but the term is usually reserved in dynamical systems to the larger class defined by Hessian eigenvalues with positive real parts, which is implied but not equivalent to $H \succ 0$ for non-symmetric matrices.

In particular, strict minima are (differential) Nash equilibria as defined by \citet{Maz}, since diagonal blocks must also be positive definite: $\nabla_{ii}L^i(\bT) \succ 0$. The converse does not hold.

\paragraph{Algorithm class.} This paper is concerned with any algorithm whose iterates are obtained by initialising $\T_0$ and applying a function $F$ to the previous iterates, namely $\T_{k+1} = F(\T_k, \ldots, \T_0)$. This holds for all gradient-based methods (deterministic or stochastic); most of them are only functions of the current iterate $\T_k$, so that $\T_k = F^k(\T_0)$. All probabilistic statements in this paper assume that $\T_0$ is initialised following any bounded and continuous measure $\nu$ on $\R^d$. Continuity is a weak requirement and widely holds across machine learning, while boundedness mostly holds in practice since the bounded region can be made large enough to accommodate required initial points.

For single-player games, the goal of such algorithms is for $\T_k$ to converge to a local (perhaps global) minimum as $k \to \infty$. The goal is less clear for differentiable games, but is generally to reach a minimum or a Nash equilibrium. In the case of GANs the goal might be to reach parameters that produce realistic images, which is more challenging to define formally.

Throughout the text we use the term \emph{(limit) cycle} to mean \emph{bounded but non-convergent iterates}. This terminology is used because bounded iterates are non-convergent if and only if they have at least two accumulation points, between which they must `cycle' infinitely often. This is not to be taken literally: the set of accumulation points may not even be connected. \citet{Hsi} provide a more complete characterisation of these cycles.

\paragraph{Game class.} Expecting global guarantees in \emph{all} differentiable games is excessive, since every continuous dynamical system arises as simultaneous GD on the loss functions of a differentiable game \cite[Lemma 1]{Bal2}. For this reason, the aforementioned authors have introduced a vastly more tractable class of games called \emph{markets}.

\begin{Definition}
A (smooth) market is a differentiable game where interactions between players are pairwise zero-sum, namely,
\[ L^i(\T) = L^i(\T^i)+\sum_{j\neq i} g_{ij}(\T^i, \T^j) \]
with $g_{ij}(\T^i, \T^j) + g_{ji}(\T^j, \T^i) = 0$ for all $i,j$.
\end{Definition}

This generalises zero-sum games while remaining  amenable to optimization and aggregation, meaning that ``we can draw conclusions about the gradient-based dynamics of the collective by summing over properties of its members'' \citep{Bal2}. Moreover, this class captures a large number of applications including GANs and related architectures, intrinsic curiosity modules, adversarial training, task-suites and population self-play. One would modestly hope for \emph{some} reasonable algorithm to converge globally in markets. We will prove that even this is too much to ask.

\subsection{Reasonable algorithms}

We wish to prove that global convergence is at odds with weak, `reasonable' desiderata. The first requirement is that fixed points of an optimization algorithm $F$ are critical points. Formally,
\begin{equation}
F(\T) = \T \implies \xi(\T) = 0 \,. \tag{R1}\label{R1}
\end{equation}
If not, some agent $i$ could strictly improve its losses by following the gradient $-\nabla_i L^i \neq 0$. There is no reason for a gradient-based algorithm to stop improving if its gradient is non-zero.

The second requirement is that algorithms avoid strict maxima. Analogous to strict minima, they are defined for single losses by a negative-definite Hessian $H \prec 0$. Converging to such a point $\bT$ is the opposite goal of any meaningful algorithm since moving \emph{anywhere} away from $\bT$ decreases the loss. There are multiple ways of generalising this concept for multiple losses, but Proposition \ref{prop_concepts} below justifies that $H \prec 0$ is the weakest one.

\begin{Proposition}\label{prop_concepts}
Write $\la(A) = \Re(\Spec(A))$ for real parts of the eigenvalues of a matrix $A$. We have the following implications, and none of them are equivalences.
\[ \begin{tikzcd}[row sep=15pt, column sep=15pt]
& \max \la(H) < 0 \arrow[r, Rightarrow] \arrow[ddr, Rightarrow] & \min \la(H) < 0 \arrow[dr, Rightarrow] & \\
H \prec 0 \arrow[ur, Rightarrow] \arrow[dr, Rightarrow] & & & \min \la(S) < 0  \\
& \max \la(H_d) < 0 \arrow[r, Rightarrow] \arrow[uur, Rightarrow] & \min \la(H_d) < 0 \arrow[ur, Rightarrow] &
\end{tikzcd} \]
\end{Proposition}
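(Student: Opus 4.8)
The plan is to read Proposition~\ref{prop_concepts} as a purely linear-algebraic statement about the block matrix $H$ together with $H_d = \bigoplus_i \nabla_{ii}L^i$ and the symmetric part $S = \tfrac12(H+H^\t)$, using only that each diagonal block $\nabla_{ii}L^i$ is a Hessian of a scalar and hence symmetric; so $H_d$ is symmetric with real spectrum, and $H \prec 0 \iff S \prec 0 \iff \max\la(S) < 0$. I would then verify the eight arrows one at a time, grouped by the tool each needs.

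\emph{Forward implications.} For $H \prec 0 \Rightarrow \max\la(H) < 0$, take an eigenpair $(\la,v)$ with $0 \neq v = a+ib \in \C^d$; since $v^* S v = a^\t S a + b^\t S b$ is real and $v^* A v$ is purely imaginary, $\Re(\la)\norm{v}^2 = \Re(v^* H v) = v^* S v < 0$, so $\Re(\la)<0$. For $H \prec 0 \Rightarrow \max\la(H_d) < 0$, testing $u^\t H u < 0$ on vectors supported in a single block $\T^i$ forces $\nabla_{ii}L^i \prec 0$, so $H_d \prec 0$ is symmetric and $\max\la(H_d) < 0$. The arrows $\max\la(H) < 0 \Rightarrow \min\la(H) < 0$ and $\max\la(H_d) < 0 \Rightarrow \min\la(H_d) < 0$ are just $\min \le \max$. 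For $\min\la(H) < 0 \Rightarrow \min\la(S) < 0$ — and likewise $\min\la(H_d) < 0 \Rightarrow \min\la(S) < 0$ using a block-supported real eigenvector — rerun the decomposition: an eigenvalue of negative real part yields $a^\t S a + b^\t S b < 0$, so one of $a,b$ is a nonzero real vector with negative Rayleigh quotient against $S$. Finally, the two crossing arrows $\max\la(H) < 0 \Rightarrow \min\la(H_d) < 0$ and $\max\la(H_d) < 0 \Rightarrow \min\la(H) < 0$ follow from the identity $\Tr H = \sum_i \Tr \nabla_{ii}L^i = \Tr H_d$: if every eigenvalue of $M \in \{H, H_d\}$ has negative real part then $\Tr M < 0$, so the other of $H, H_d$ also has negative trace, and its eigenvalues summing to something negative, at least one of them has negative real part.

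\emph{No equivalences.} To refute every converse it suffices to exhibit three $2\times2$ Hessians, each realised by a quadratic two-player game $L^1 = \tfrac a2(\T^1)^2 + b\,\T^1\T^2$, $L^2 = \tfrac c2(\T^2)^2 + d\,\T^1\T^2$ (game Hessian $\left(\begin{smallmatrix} a & b \\ d & c\end{smallmatrix}\right)$ at the origin), and to read off the quantities. I would use $H = \left(\begin{smallmatrix} -1 & 3 \\ 0 & -1 \end{smallmatrix}\right)$: here $\max\la(H) = \max\la(H_d) = -1 < 0$ but $S = \left(\begin{smallmatrix} -1 & 3/2 \\ 3/2 & -1 \end{smallmatrix}\right)$ has eigenvalue $1/2 > 0$, so $H \not\prec 0$, refuting the converses of both arrows out of $H \prec 0$; the saddle $H = \diag(-1,1) = H_d$: here $\min\la(H) = \min\la(H_d) = -1 < 0$ while $\max\la(H) = \max\la(H_d) = 1$, refuting the converses of the four arrows among $\{\min\la(H),\max\la(H),\min\la(H_d),\max\la(H_d)\}$; and $H = \left(\begin{smallmatrix} 1/10 & 3 \\ 0 & 1/10 \end{smallmatrix}\right)$: here $\min\la(H) = \min\la(H_d) = 1/10 > 0$ but $S = \left(\begin{smallmatrix} 1/10 & 3/2 \\ 3/2 & 1/10 \end{smallmatrix}\right)$ has a negative eigenvalue, so $\min\la(S) < 0$ implies neither $\min\la(H) < 0$ nor $\min\la(H_d) < 0$. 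A short check confirms these three matrices break all eight converses between them.

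I expect the only delicate point to be the complex-eigenvector bookkeeping behind the arrows into $\min\la(S) < 0$ and $\max\la(H) < 0$ — confirming that $v^* S v$ is real, that $v^* A v$ is imaginary, and that strictness survives the argument — together with spotting that the two crossing arrows are nothing more than $\Tr H = \Tr H_d$. Everything else is elementary, and enumerating the eight non-reversals only requires being systematic about which sample matrix kills which converse.
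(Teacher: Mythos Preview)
Your proposal is correct and follows essentially the same approach as the paper: the trace identity $\Tr H = \Tr H_d$ for the crossing arrows, the fact that $H\prec 0$ forces negative real parts of eigenvalues and negative-definite diagonal blocks, and small $2\times 2$ counterexamples to break each converse. The only cosmetic differences are that you prove the complex-eigenvector fact explicitly where the paper invokes it, you argue the rightmost arrows directly rather than by the paper's contraposition, and you choose different (but equally valid) sample matrices.
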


\begin{Definition}
A critical point $\bT$ is a (strict, local) \emph{maximum} if $H(\bT) \prec 0$.
\end{Definition}

Imposing that algorithms avoid strict maxima is therefore the weakest possible requirement of its kind. Note that the bottom-left implication Proposition \ref{prop_concepts} is equivalent to $\nabla_{ii} L^i \prec 0$ for all $i$, so strict maxima are also strict maxima of each player's individual loss function. Players can \emph{all} decrease their losses by moving \emph{anywhere} away from them. It is exceedingly reasonable to ask that optimization algorithms avoid these points almost surely. Formally, we require that for any strict maximum $\bT$ and bounded region $U$ there are hyperparameters such that
\begin{equation}
\mu\left(\{ \T_0 \in U \mid \lim_k \T_k = \bT \}\right) = 0 \,.
\tag{R2}\label{R2}
\end{equation}
$\mu$ denotes Lebesgue measure. Hyperparameters may depend on the given game and the region $U$, as is typical for learning rates in gradient-based methods.

\begin{Definition}[Reason]
An algorithm is \emph{reasonable} if it satisfies \ref{R1} and \ref{R2}.
\end{Definition}

Reason is not equivalent to \emph{rationality} or \emph{self-interest}. Reason is much weaker, imposing only that agents are well-behaved regarding strict maxima even if their individual behavior is not self-interested. For instance, SGA agents do not behave out of self-interest \citep{Bal}.

\section{Global Convergence in Differentiable Games}

\subsection{Reasonable algorithms fail to converge globally}

Our main contribution is to show that global guarantees do not exist for \emph{any} reasonable algorithm. First recall that global convergence should not be expected in \emph{all} games, since there may be a divergent direction with minimal loss (imagine minimising $L = e^x$). It should however be asked that algorithms have bounded iterates in \emph{coercive} games, defined by coercive losses
\[ \lim_{\lVert \T \rVert \to \infty} L^i(\T) = \infty \]
for all $i$. Indeed, unbounded iterates in coercive games would lead to infinite losses for \emph{all} agents, the worst possible outcome. Given bounded iterates, convergence should hold if the Hessian is nondegenerate at critical points (which must therefore be isolated, recall Proposition \ref{prop_gd}). We call such a game \emph{nondegenerate}. This condition can also be replaced by analyticity of the loss. In the spirit of weakest assumptions, we ask for convergence when \emph{both} conditions hold.

\begin{Definition}[Globality]
An algorithm is \emph{global} if, in a coercive, analytic and nondegenerate game, for any fixed $\T_0$, iterates $\T_k$ are bounded and converge for suitable hyperparameters. \hfill \normalfont{(G1)}
\end{Definition}

Note that GD is global for \emph{single-player} games by Proposition \ref{prop_gd}. Unfortunately, reason and globality are fundamentally at odds as soon as we move to two-player markets.

\begin{Theorem}\label{th_global}
There is a coercive, nondegenerate, analytic two-player market $\cM$ whose only critical point is a strict maximum. In particular, algorithms only have four possible outcomes in $\cM$:
\begin{enumerate}
\item Iterates are unbounded, and all players diverge to infinite loss. [Not global]
\item Iterates are bounded and converge to the strict maximum. [Not reasonable]
\item Iterates are bounded and converge to a non-critical point. [Not reasonable]
\item Iterates are bounded but do not converge (cycle). [Not global]
\end{enumerate}
\end{Theorem}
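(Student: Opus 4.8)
The plan is to reduce the theorem to a single construction: once we produce a market $\cM$ whose unique critical point is a strict maximum, the four-outcome list is just a case split. In a coercive game, unbounded iterates force $L^i(\T_k)\to\infty$ for every $i$ (outcome 1); bounded iterates either converge or have $\ge 2$ accumulation points and hence cycle (outcome 4); and a convergent reasonable algorithm has its limit a fixed point of the update, so $\xi=0$ there by \ref{R1}, forcing the limit to be the unique critical point, namely the strict maximum (outcome 2) — the remaining possibility being convergence to a non-critical point (outcome 3). So the real content is the construction.

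I would look for $\cM$ with one-dimensional players $x:=\T^1\in\R$ and $y:=\T^2\in\R$ of the form $L^1=\phi(x)+g(x,y)$, $L^2=\psi(y)-g(x,y)$; any such pair is automatically a two-player market with pairwise zero-sum interaction $g_{12}=g=-g_{21}$, and note a genuine zero-sum game ($L^2=-L^1$) can never be coercive, so the market generality is essential here. At a critical point the game Hessian is $H=\left(\begin{smallmatrix}\phi''+g_{xx} & g_{xy}\\ -g_{xy} & \psi''-g_{yy}\end{smallmatrix}\right)$, whose symmetric part is the \emph{diagonal} matrix $\diag(\phi''+g_{xx},\,\psi''-g_{yy})$; hence $H\prec0$ reduces to two scalar inequalities, and the interaction can feed negative curvature to one player and positive curvature to the other. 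I would take $\phi(x)=\tfrac{1}{4}x^4=\psi(x)$ and $g(x,y)=-\tfrac{1}{2}x^2+\tfrac{1}{2}y^2+xy$ (any sufficiently large cross-coupling relative to the quadratic self-terms works), so that
\[ L^1=\tfrac{1}{4}x^4-x^2+\tfrac{1}{2}(x+y)^2,\qquad L^2=\tfrac{1}{4}y^4-y^2+\tfrac{1}{2}(x-y)^2. \]
From this form each $L^i$ is coercive (in the coordinate $z=x+y$, resp.\ $z=x-y$, it reads $\tfrac{1}{4}x^4-x^2+\tfrac{1}{2}z^2$, plainly coercive) and analytic, $\xi=(x^3-x+y,\ y^3-y-x)$, and $H(0,0)=\left(\begin{smallmatrix}-1 & 1\\ -1 & -1\end{smallmatrix}\right)$ has symmetric part $-I\prec0$ with $\det H(0,0)=2\neq0$: the origin is a nondegenerate strict maximum.

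It remains to show the origin is the \emph{only} critical point. Solving $x^3-x+y=0$ gives $y=x-x^3$; substituting into $y^3-y-x=0$ and factoring out $x$ leaves $x\,p(x)=0$ with $p(x)=x^2(1-x^2)^3+x^2-2$. One then verifies $p(x)<0$ for all real $x$ (equivalently, with $v=1-x^2\le1$, that $-v^4+v^3-v-1<0$) by elementary estimates: on $|x|\le1$ we have $x^2(1-x^2)^3\le\max_{u\in[0,1]}u(1-u)^3=\tfrac{27}{256}$ and $x^2\le1$, so $p(x)\le\tfrac{27}{256}+1-2<0$; on $|x|>1$ the term $x^2(1-x^2)^3$ is negative, and either $x^2<2$ (so $x^2-2<0$) or $x^2\ge2$ (so $x^2(1-x^2)^3\le-x^2$ using $(x^2-1)^3\ge1$), giving $p(x)<0$ in all cases. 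Hence $x=0$, so $(x,y)=(0,0)$, and $\cM$ has all the stated properties. I expect the main obstacle to be precisely this balancing act: coercivity pushes the self-potentials to grow, the strict-maximum requirement forces all the negative curvature into the interaction (since $\phi''(0)=0$) whose quadratic part then threatens coercivity, and one must confirm $p$ is root-free \emph{globally} rather than merely near the origin — which is where naive attempts (e.g.\ a bilinear interaction with quadratic self-potentials, which is not coercive) break down. Once the polynomial inequality is checked the rest is routine.
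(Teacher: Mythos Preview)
Your construction is correct and considerably simpler than the paper's. The paper builds $\cM$ from rational losses
\[
L^1 = \tfrac{x^6}{6}-\tfrac{x^2}{2}+xy+\tfrac14\Bigl(\tfrac{y^4}{1+x^2}-\tfrac{x^4}{1+y^2}\Bigr),\qquad L^2 = \tfrac{y^6}{6}-\tfrac{y^2}{2}-xy-\tfrac14\Bigl(\tfrac{y^4}{1+x^2}-\tfrac{x^4}{1+y^2}\Bigr),
\]
and spends several pages on a case-by-case inequality argument (with an alternative computer-assisted Sturm-sequence verification) to show the origin is the only critical point. Your polynomial market has the same Hessian $\left(\begin{smallmatrix}-1&1\\-1&-1\end{smallmatrix}\right)$ at the origin, but the uniqueness reduces to the one-variable inequality $p(x)=x^2(1-x^2)^3+x^2-2<0$, which you dispatch in three lines. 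The coercivity check via the linear change $(x,z)=(x,x+y)$ is also cleaner than the paper's term-by-term domination argument.

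It is worth noting that your simultaneous gradient $\xi=(x^3-x+y,\;y^3-y-x)$ is \emph{exactly} the gradient field of the paper's zero-sum game $\cN$ from Theorem~\ref{prop_zerosum}, where $L^1=xy-\tfrac{x^2}{2}+\tfrac{y^2}{2}+\tfrac{x^4}{4}-\tfrac{y^4}{4}=-L^2$. You have effectively found a \emph{coercive market} whose dynamics coincide with those of $\cN$; the paper keeps $\cM$ and $\cN$ separate because a zero-sum game can never be coercive, but your observation shows the same vector field serves both purposes once the losses are repackaged. This buys you a shorter proof and a single example doing double duty; what the paper's more elaborate $\cM$ buys is less clear, beyond perhaps making the coercivity ``intrinsic'' to each player's own highest-order term rather than relying on the interaction.
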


\begin{proof}
Consider the analytic market $\cM$ given by
\begin{align*}
L^1(x,y) &= x^6/6-x^2/2+xy+\frac{1}{4}\left(\frac{y^4}{1+x^2}-\frac{x^4}{1+y^2}\right) \\
L^2(x,y) &= y^6/6-y^2/2-xy-\frac{1}{4}\left(\frac{y^4}{1+x^2}-\frac{x^4}{1+y^2}\right) \,.
\end{align*}
We prove in Appendix \ref{app_th_global} that $\cM$ is coercive, nondegenerate, and has a unique critical point at the origin, which is a strict maximum.
\end{proof}

Constructing an algorithm with global guarantees is therefore doomed to be unreasonable in that it will converge to strict maxima or non-critical points in $\cM$.

None of the outcomes of $\cM$ are satisfactory. The first three are highly objectionable, as already discussed. The fourth is less obvious, and may even have game-theoretic significance \citep{Pap}, but is counter-intuitive from an optimization standpoint. Terminating the iteration would lead to a non-critical point, much like the third outcome. Even if we let agents update parameters continuously as they play a game or solve a task, they will have oscillatory behavior and fail to produce consistent outcomes (e.g. when generating an image or playing Starcraft).

The hope for machine learning is that such predicaments do not arise in applications we care about, such as GANs or intrinsic curiosity. This may well be the case, but proving or disproving global convergence in these specific settings is beyond the scope of this paper.

\paragraph{Remark.} Why can this approach not be used to \emph{disprove} global convergence for single losses? One reason is that we cannot construct a coercive loss with no critical points other than strict maxima: coercive losses, unlike games, always have a global minimum.

\subsection{What if there are strict minima?}

One might wonder if it is purely the absence of strict minima that causes non-convergence, since strict minima are locally attracting under gradient dynamics. Can we guarantee global convergence if we impose existence of a minimum, and more, the absence of any other critical points?

Unfortunately, strict minima may have an arbitrarily small region of attraction. Assuming parameters are initialised following any bounded continuous measure $\nu$ on $\R^d$, we can always modify $\cM$ by deforming a correspondingly small region around the origin, turning it into a minimum while leaving the dynamics unchanged outside of this region.

For a fixed initial distribution, any reasonable algorithm can therefore enter a limit cycle or diverge to infinite losses with arbitrarily high probability.

\begin{Theorem}\label{th_global_min}
Given a reasonable algorithm with bounded continuous distribution on $\T_0$ and a real number $\ep > 0$, there exists a coercive, nondegenerate, almost-everywhere analytic two-player market $\cM_\sig$ with a strict minimum and no other critical points, such that $\T_k$ either cycles or diverges to infinite losses for both players with probability at least $1-\ep$.
\end{Theorem}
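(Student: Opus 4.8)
The plan is to take the market $\cM$ from Theorem~\ref{th_global} and deform it inside a small ball $B_\sigma=\{\norm{\T}<\sigma\}$ around the origin so as to flip the sign of the game Hessian, turning the unique critical point into a strict, nondegenerate minimum while leaving $\cM$ --- hence the update map of \emph{any} algorithm --- untouched outside $B_\sigma$. Since a trajectory of the deformed game $\cM_\sigma$ agrees with the $\cM$-trajectory from the same $\T_0$ until it first enters $B_\sigma$, the region of attraction of the new minimum is contained in $V_\sigma:=\{\T_0:\text{the }\cM\text{-trajectory ever visits }B_\sigma\}$, a set that shrinks as $\sigma\to0$; I will argue $\nu(V_\sigma)\to0$, so for $\sigma$ small enough $\nu(V_\sigma)<\ep$. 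As $\cM_\sigma$ then has the minimum as its only critical point and no strict maximum, a reasonable algorithm started outside $V_\sigma$ converges to nothing, so with probability $\geq1-\ep$ its iterates are unbounded --- forcing all losses to $+\infty$ by coercivity --- or bounded and non-convergent, i.e.\ a limit cycle.

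\textbf{Construction.} Fix a smooth bump $\ga:\R\to[0,1]$ with $\ga\equiv1$ on $[-1,1]$, $\ga\equiv0$ off $(-2,2)$, put $\psi_\sigma(t)=\int_0^t 2s\,\ga(s/\sigma)\,ds$, and set $\tilde L^1=L^1+\psi_\sigma(x)$, $\tilde L^2=L^2+\psi_\sigma(y)$. Then $\psi_\sigma(t)=t^2$ for $\abs t\le\sigma$, $\psi_\sigma$ is constant for $\abs t\ge2\sigma$, $\psi_\sigma''(0)=2$, and $\psi_\sigma$ is bounded and analytic off $\abs t\in\{\sigma,2\sigma\}$. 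Since only single-player terms change, $\cM_\sigma$ is again a two-player market; the bounded perturbation keeps it coercive; the losses are analytic off finitely many lines, hence $\cM_\sigma$ is almost-everywhere analytic. At the origin $\xi$ is unchanged (still critical) while $\tilde H(0)=H(0)+\diag(2,2)$ has symmetric part $+I\succ0$ (it was $-I\prec0$) and nonzero determinant, so the origin is a strict, nondegenerate minimum. Ruling out other critical points forces $\sigma$ small: for $\norm\T\le\sigma$, $\tilde\xi(\T)=\xi(\T)+2\T=(H(0)+2I)\T+O(\norm\T^2)$ with $H(0)+2I$ invertible, so there is no zero in $B_\sigma\setminus\{0\}$ once $\sigma$ is below a threshold; outside $B_\sigma$ one has $\tilde\xi=\xi$ except on the ``cross'' $\{\abs x<2\sigma\}\cup\{\abs y<2\sigma\}$, where $\xi$ is the fixed analytic field of $\cM$ (vanishing only at $0$) perturbed by an $O(\sigma)$ term; a zero of $\tilde\xi$ there must sit near $0$ (excluded) or near one of the finitely many points of $\{\xi_2=0,\,x=0\}\cup\{\xi_1=0,\,y=0\}$ distinct from $0$, but at those points the unperturbed coordinate of $\xi$ is bounded away from $0$, contradicting a vanishing $\tilde\xi$ for $\sigma$ small. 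Hence the origin is the unique critical point of $\cM_\sigma$.

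\textbf{Probability estimate.} Choose hyperparameters as in \ref{R2} for the strict maximum of $\cM$ and a bounded $U\supseteq\operatorname{supp}\nu$, making the stable set $W^s(0)$ of the origin in $\cM$ Lebesgue-null. The sets $V_\sigma=\bigcup_k F_{\cM}^{-k}(B_\sigma)$ are open and decrease in $\sigma$ to $\bigcap_{\sigma>0}V_\sigma=\{\T_0:\liminf_k\norm{\T_k}=0\text{ in }\cM\}$. Because the origin is a strict maximum, Proposition~\ref{prop_concepts} gives $\max\la(H(0))<0$, i.e.\ it is a repelling critical point: iterate norms strictly increase on a fixed punctured neighbourhood of $0$ until the trajectory leaves it, so a trajectory accumulating at $0$ in fact converges to $0$. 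Thus $\bigcap_\sigma V_\sigma\subseteq W^s(0)$ up to a $\nu$-null set, and since $\nu$ is continuous, $\nu(\bigcap_\sigma V_\sigma)=0$; by continuity from above of the finite measure $\nu$, $\nu(V_\sigma)\to0$, so fix $\sigma$ with $\nu(V_\sigma)<\ep$. For $\T_0\notin V_\sigma$ the $\cM_\sigma$-trajectory never enters $B_\sigma$, hence never converges to $0$ --- the only critical point of $\cM_\sigma$ --- and by \ref{R1} cannot converge to a non-critical point; so it is unbounded (all losses $\to+\infty$ by coercivity) or bounded and non-convergent. Since $\{\T_0\notin V_\sigma\}$ has probability $\geq1-\ep$, the claim follows.

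\textbf{Main obstacle.} The crux is showing $\nu(\bigcap_\sigma V_\sigma)=0$, i.e.\ that $\nu$-a.e.\ $\cM$-trajectory is eventually bounded away from the origin. For the algorithms in $\cA$ this is clean: trajectories hitting $0$ at a finite time form a countable union of zero sets of nontrivial analytic maps, hence are null; and a trajectory accumulating at the repelling $0$ without converging would have to re-enter a fixed repelling neighbourhood infinitely often, each time arriving near one of the (countably many, analytic, hence null) preimages of $0$, which a Borel--Cantelli estimate on preimages of shrinking balls rules out for almost every $\T_0$. Making this argument uniform over \emph{all} reasonable (possibly non-smooth) algorithms is the delicate remaining point; it is also where the low-dimensional, coercive, analytic structure of $\cM$ --- the absence of spurious attractors touching the origin --- is essential.
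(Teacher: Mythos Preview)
Your overall strategy --- deform $\cM$ in a tiny neighbourhood of the origin so the unique critical point becomes a strict minimum, then argue that almost no $\cM$-trajectory ever reaches that neighbourhood --- is exactly the paper's strategy. The execution, however, has a structural gap in the construction that breaks the probability estimate.

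\textbf{The construction does not localise the perturbation to a ball.} You add the single-player terms $\psi_\sigma(x)$ to $L^1$ and $\psi_\sigma(y)$ to $L^2$. Then $\tilde\xi_1=\xi_1+2x\,\ga(x/\sigma)$ differs from $\xi_1$ on the whole strip $\{|x|<2\sigma\}$, and similarly for $\tilde\xi_2$. Thus $\tilde\xi\neq\xi$ on the \emph{unbounded cross} $\{|x|<2\sigma\}\cup\{|y|<2\sigma\}$, not on $B_\sigma$. You even note this when checking critical points, but in the probability paragraph you silently revert to assuming that $\cM_\sigma$- and $\cM$-trajectories agree until they enter $B_\sigma$. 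They do not: they can diverge the moment the $\cM$-trajectory touches the cross. Since the limit cycle of $\cM$ encircles the origin, essentially every $\cM$-trajectory hits the cross, so the set of initial conditions on which the two trajectories provably coincide has measure close to zero, not close to one. The conclusion ``for $\T_0\notin V_\sigma$ the $\cM_\sigma$-trajectory never enters $B_\sigma$'' is therefore unjustified, and with it the whole estimate.

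The paper avoids this by perturbing with a \emph{zero-sum interaction} $\pm f_\sigma(x,y)$ engineered so that, for $\norm\T\ge\sigma$, $f_\sigma=(x^2+y^2-\sigma^2)/2$; then $\nabla_x f_\sigma=x$ and $\nabla_y f_\sigma=y$ are absorbed into the modified single-player coefficients, and one checks $\xi_{\cM_\sigma}=\xi_{\cM}$ \emph{exactly} outside the ball $B_\sigma$. That is what makes the ``trajectories agree until they enter $B_\sigma$'' step valid. Any fix of your construction must achieve the same: the region where $\xi$ changes has to be contained in a ball of radius $O(\sigma)$, which rules out separable single-player bumps.

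\textbf{On the ``main obstacle''.} You correctly flag that showing $\nu(\bigcap_\sigma V_\sigma)=0$ --- equivalently, that almost no $\cM$-trajectory accumulates at the strict maximum without converging to it --- is the delicate step. Your repelling-neighbourhood argument (``iterate norms strictly increase near $0$'') presupposes gradient-like local behaviour that the bare definition of \emph{reasonable} does not supply; the algorithm class allows $F$ to depend on the full history and need not be smooth. The paper's version of this step is a measure-continuity argument on the sets $Z_\delta=\{\T_0:\inf_k\norm{F^k_0(\T_0)}<\delta\}$, passing from $\mu(Z)=0$ (reasonableness) to $\mu(Z_\delta)\to0$; you may find it worthwhile to compare, though the identification of $\bigcap_n Z_{1/n}$ with a null set there also leans on the same accumulation-vs-convergence issue you raise.
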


\begin{proof}
Let $0 < \sig < 0.1$ and define
\[ f_\sig(\T) = \begin{cases}
(x^2+y^2-\sig^2)/2 & \text{if} \, \norm{\T} \geq \sig \\[5pt]
(y^2-3x^2)(x^2+y^2-\sig^2)/(2\sig^2) & \text{otherwise,}
\end{cases} \]
where $\T = (x,y)$ and $\norm{\T} = \sqrt{x^2+y^2}$ is the standard $L2$-norm. Note that $f_\sig$ is continuous since
\[ \lim_{\norm{\T} \to \sig^+} f_\sig(x,y) = 0 = \lim_{\norm{\T} \to \sig^-} f_\sig(x) \,. \]
Now consider the two-player market $\cM_\sig$ given by
\begin{align*}
L^1(x,y) &= x^6/6-x^2+f_\sig(x,y)+xy+\frac{1}{4}\left(\frac{y^4}{1+x^2}-\frac{x^4}{1+y^2}\right) \\
L^2(x,y) &= y^6/6-f_\sig(x,y)-xy-\frac{1}{4}\left(\frac{y^4}{1+x^2}-\frac{x^4}{1+y^2}\right) \,.
\end{align*}
We prove in Appendix \ref{app_th_global_min} that $\cM_\sig$ is a coercive, nondegenerate, almost-everywhere analytic game whose only critical point is a strict minimum at the origin. We then prove that $\T_k$ cycles or diverges with probability at least $1-\ep$, and plot iterates for each algorithm in $\cA$.
\end{proof}

\subsection{How do existing algorithms behave?}

Any algorithm will either fail to be reasonable or global in $\cM$. Nonetheless, it would be interesting to determine the specific failure that each algorithm in $\cA$ exhibits. Each of them is defined in Appendix \ref{app_algos}, writing $\al$ for the learning rate and $\ga$ for the Consensus Optimization hyperparameter. We expect each algorithm to be reasonable and moreover to have bounded iterates in $\cM$ for suitably small hyperparameters. If this holds, they must cycle by Theorem \ref{th_global}.

This was witnessed experimentally across 1000 runs for $\al = \ga = 0.01$, with every run resulting in cycles. A single such run is illustrated in Figure \ref{fig_market}. Algorithms may follow one of the three other outcomes for other hyperparameters, for instance diverging to infinite loss if $\al$ is too large or converging to the strict maximum for CO if $\ga$ is too large. The point here is to characterise the `regular' behavior which can be seen as that occurring for sufficiently small hyperparameters.

\begin{figure}[t]
\centering
\includegraphics[width=\textwidth]{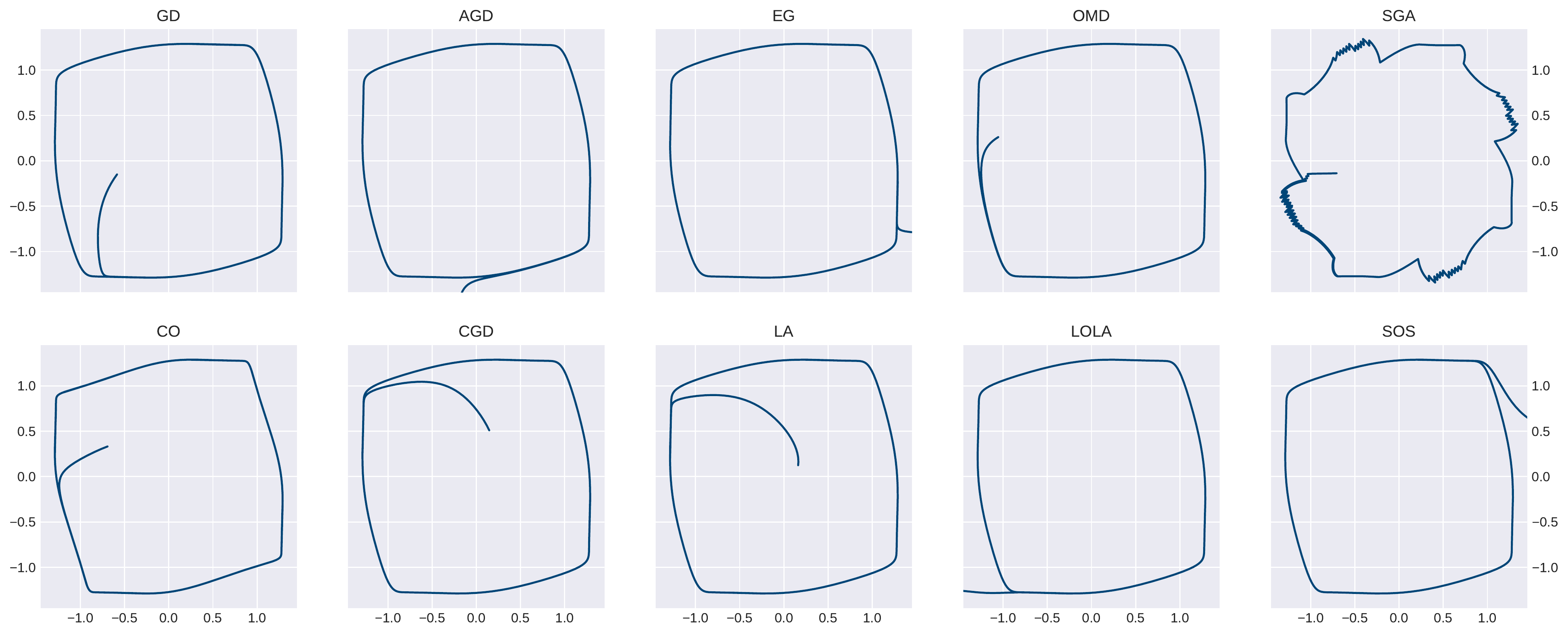}
\caption{Algorithms in $\cA$ fail to converge in $\cM$ with $\al = \ga = 0.01$. Single run with standard normal initialisation, 3000 iterations. The behavior of SGA is slightly different, explained by the presence of a non-continuous parameter $\la$ jumping between $\pm 1$ according to an alignment criterion.}
\label{fig_market}
\end{figure}

Instead of \emph{proving} that algorithms must cycle in $\cM$, we construct a \emph{zero-sum} game $\cN$ with similar properties as $\cM$ and prove below that algorithms in $\cA$ almost surely fail to converge there for small $\al, \ga$. This is stronger than proving the analogous result for $\cM$, since $\cN$ belongs to the even smaller class of zero-sum games which one might have hoped was well-behaved.

In this light, one might wish to extend Theorem \ref{th_global} to zero-sum games. However, zero-sum games cannot be coercive since $L^1 \to \infty$ implies $L^2 \to -\infty$. It is therefore unclear whether global guarantees should be expected. Note however that $\cN$ will be \emph{weakly-coercive} in the sense that
\[ \lim_{\norm{\T^i} \to \infty} L^i(\T^i, \T^{-i}) = \infty \]
for all $i$ and fixed $\T^{-i}$.\vspace*{5pt}

\begin{Theorem}\label{prop_zerosum}
There is a weakly-coercive, nondegenerate, analytic two-player zero-sum game $\cN$ whose only critical point is a strict maximum. Algorithms in $\cA$ almost surely have bounded non-convergent iterates in $\cN$ for $\al, \ga$ sufficiently small.
\end{Theorem}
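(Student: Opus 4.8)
The plan is to construct $\cN$ by mimicking the structure of $\cM$ but in a setting where the cross term is the \emph{only} coupling, so that the game is genuinely zero-sum. A natural candidate is $L^1(x,y) = x^6/6 - x^2/2 + \ga(x,y)$ and $L^2 = -L^1$ with $\ga$ an odd-in-each-variable interaction (e.g.\ $xy$ plus higher-order corrections chosen to keep the origin the unique critical point). First I would verify the claimed properties of $\cN$ by computing $\xi$ explicitly: show $\xi(0,0)=0$; show there are no other zeros of $\xi$ (this is where the polynomial bookkeeping lives, analogous to the $\cM$ computation deferred to Appendix \ref{app_th_global}); compute $H(0,0)$ and check $H(0,0) \prec 0$, i.e.\ that the symmetric part $S(0,0)$ is negative definite so the origin is a strict maximum in the sense of the Definition above; check nondegeneracy ($\det H \neq 0$) at the origin; and observe weak-coercivity, which is immediate from the $x^6$ (resp.\ $y^6$) leading term dominating in $\T^i$ for fixed $\T^{-i}$. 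Analyticity is obvious if $\ga$ is polynomial.

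The substantive claim is the almost-sure non-convergence of every algorithm in $\cA$. Here I would argue in two stages. \emph{Stage 1 (boundedness).} For $\al,\ga$ sufficiently small, each algorithm $F \in \cA$ is a small perturbation of the identity whose update map preserves a large compact sublevel-type region: using weak-coercivity, pick a product box $[-R,R]^2$ with $R$ large; show that on the boundary the relevant gradient-based correction points inward (the $x^6$ term makes $-\nabla_1 L^1$ point inward for $|x|$ large, similarly for $y$), and that this inward-pointing property survives the $O(\al^2)$ and $O(\ga)$ modifications that distinguish EG, OMD, CO, CGD, SGA, LOLA, SOS from plain GD — since each such modification is a bounded-in-$U$ perturbation scaled by the hyperparameters. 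Hence iterates started in the box stay in a slightly larger box: bounded. \emph{Stage 2 (non-convergence).} The only candidate limit consistent with \ref{R1} (which all of $\cA$ satisfy — cite Appendix \ref{app_algos}) is the origin, since it is the unique critical point. So it suffices to show the origin is avoided almost surely. For this I would linearise each algorithm's update map at the origin and show the Jacobian has a spectral radius $>1$, i.e.\ the origin is a linearly unstable fixed point: intuitively, because $H(0,0) \prec 0$, the naive $I - \al\xi$ has eigenvalues $1 - \al\la$ with $\Re(\la)<0$, hence modulus $>1$ for small $\al$; and again the $\cA$-specific corrections perturb this by $O(\al^2,\ga\al)$, not enough to pull the spectral radius back to $\le 1$ when $\al,\ga$ are small. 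A linearly unstable hyperbolic fixed point has a stable manifold of dimension $<d$, hence Lebesgue-measure zero; by the stable manifold theorem (the same machinery underlying \ref{R2}-type arguments, cf.\ \citet{Lee}) the set of initial conditions converging to it is measure zero, so a continuous initialisation $\nu$ assigns it probability $0$. Combining: bounded iterates that almost surely do not converge — i.e.\ cycle.

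The main obstacle I anticipate is \textbf{Stage 1 uniformly over the whole family $\cA$}: the algorithms are heterogeneous (some use the transposed Jacobian $H^\t$, CO adds $\ga H^\t \xi$, SGA adds $\ga A^\t \xi$ with a sign-switching $\la$, LOLA/SOS involve second-order opponent terms), and one must check that for \emph{each} of them there is a common regime $0<\al<\al_0$, $0<\ga<\ga_0$ in which both the inward-pointing boundary estimate and the instability-at-origin estimate hold. The SGA sign parameter $\la \in \{\pm1\}$ is the awkward case, as already flagged in the caption of Figure \ref{fig_market}, because it makes the update map discontinuous; I would handle it by checking both sign choices separately and noting each gives an unstable origin and an inward-pointing boundary, so the conclusion is insensitive to which branch is active. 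A secondary obstacle is making sure the non-zero-sum-looking higher-order correction terms needed to pin down uniqueness of the critical point of $\cN$ can be chosen \emph{without} breaking the exact zero-sum relation $L^2 = -L^1$ — but this is automatic if all corrections are placed inside the single shared function that enters $L^1$ with a $+$ and $L^2$ with a $-$. The rest is bookkeeping of the kind already relegated to the appendices for Theorems \ref{th_global} and \ref{th_global_min}.
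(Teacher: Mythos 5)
Your overall architecture (explicit polynomial zero-sum game, unique critical point at the origin which is a strict maximum, a trapping-region argument for boundedness, linear instability at the origin plus a measure-zero stable-set argument) is the same as the paper's, but two steps you treat as citable or routine are genuine gaps. First, in Stage 2 you take it as given that the only possible limit is the origin because ``all of $\cA$ satisfy \ref{R1} --- cite Appendix \ref{app_algos}''. That appendix only \emph{defines} the algorithms; whether their fixed points are critical points of $\cN$ is exactly what must be proved, separately for each method and only for small $\al,\ga$. For EG the fixed-point equation is $\xi\circ(\id-\al\xi)=0$, which can vanish at non-critical points (the paper notes EG and OMD may indeed converge to non-critical points for larger $\al$, and rules this out for small $\al$ by a dedicated contradiction argument using the boundedness region); OMD needs the pair-space reformulation of \citet{Das3}; LOLA/SOS need the identity $\diag(H_o^\t\nabla L)=H_o\xi$ in this game; CO, SGA, CGD, LA need invertibility of the relevant preconditioning matrices. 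Without this per-algorithm fixed-point analysis the ``only candidate limit is the origin'' claim is unsupported.

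Second, your uniform-perturbation reasoning (``each modification is scaled by the hyperparameters'') fails for SGA, whose correction $\la A^\t\xi$ with $\la=\pm1$ is $O(1)$; the paper has to prove $\T^\t G_{\text{SGA}}>1$ outside a ball separately for both branches to get boundedness, and to determine the sign of $\la$ near the origin before concluding instability (for $\la=+1$ the Jacobian $(I+\la A^\t)H$ at the origin is \emph{not} negative definite --- its symmetric part vanishes --- so the definiteness-based estimate you inherit from GD does not carry over; one must either show $\la\le 0$ near the origin, as the paper does, or switch to a spectral-radius criterion, and then the discontinuity of the SGA update rules out a naive appeal to the stable manifold theorem, which needs $C^1$ maps). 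Relatedly, your measure-zero step also needs the update map to send null sets to null sets under preimages; the paper establishes this by showing $F$ restricted to the bounded region is injective with invertible Jacobian (a diffeomorphism onto its image, via a Lipschitz bound and Lemma \ref{lem_lipschitz}), and in fact replaces the stable manifold theorem by a direct argument that $\norm{\T_{k+1}}>\norm{\T_k}$ near the origin because $\nabla G(0)\prec 0$, so that the stable set is contained in $\bigcup_k F^{-k}(\{0\})$. These are the pieces your proposal would need to supply to close the argument; the construction and verification of $\cN$ itself (the paper uses the quartic $L^1=xy-x^2/2+y^2/2+x^4/4-y^4/4=-L^2$) is comparatively easy and consistent with your sketch.
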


\begin{proof}
Consider the analytic zero-sum game $\cN$ given by
\[ L^1 = xy-x^2/2+y^2/2+x^4/4-y^4/4 = -L^2 \,. \]
We prove in Appendix \ref{app_prop_zerosum} that $\cN$ is weakly-coercive, nondegenerate, and has a unique critical point at the origin which is a strict maximum. We prove that algorithms in $\cA$ have the origin as unique fixed points, with negative-definite Jacobian for $\al, \ga$ small, hence failing to converge almost surely. We moreover prove that algorithms have \emph{bounded} non-convergent iterates in $\cN$ for $\al, \ga$ sufficiently small. Iterates are plotted for a single run of each algorithm in Figure \ref{fig_zerosum} with $\al = \ga = 0.01$.
\end{proof}

As in $\cM$, the behavior of each algorithm may differ for larger hyperparameters. All algorithms may have unbounded iterates or converge to the strict maximum for large $\al$, while EG and OMD may even converge to a non-critical point (see proof). All such outcomes are unsatisfactory, though unbounded iteration will not result in positive infinite losses for \emph{both} players since $L^1 = -L^2$.

\subsection{Corollary: there are no suitable measures of progress}

A crucial step in proving global convergence of GD on single losses is showing that the set of accumulation points is a subset of critical points, using the function value as a `measure of progress'. The fact that this fails for differentiable games implies that there can be no suitable measures of progress for reasonable algorithms with bounded iterates. We formalise this below, answering the question of \citet{Sch} quoted in the introduction. 

\begin{Definition}
A measure of progress for an algorithm given by $\T_{k+1} = F(\T_k)$ is a continuous map $M : \R^d \to \R$, bounded below, such that $M(F(\T)) \leq M(\T)$ and $M(F(\T)) = M(\T)$ iff $F(\T) = \T$.
\end{Definition}

Measures of progress are very similar to \emph{descent functions}, as defined by \citet{Lue}, and somewhat akin to Lyapunov functions. The function value $f$ is a measure of progress for single-loss GD under the usual regularity conditions, while the gradient norm $\lVert \xi \rVert$ is a measure of progress for GD in \emph{strictly convex} differentiable games:
\[ \lVert \xi(\T-\al\xi) \rVert^2 = \lVert \xi \rVert^2 -\al\xi^T H^t \xi + o(\al) \leq \lVert \xi \rVert^2 \]
for small $\al$. Unfortunately, games like $\cM$ prevent the existence of such measures in general.

\begin{Corollary}\label{cor_progress}
There are no measures of progress for reasonable algorithms which produce bounded iterates in $\cM$ or $\cN$.
\end{Corollary}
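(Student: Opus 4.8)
The plan is to run the classical Lyapunov/descent argument and then collide it with Theorems~\ref{th_global} and~\ref{prop_zerosum}. Suppose, for contradiction, that $M$ is a measure of progress for a reasonable algorithm $F$ that produces bounded iterates $\{\T_k\}$ in $\cM$ (the argument for $\cN$ is identical, using Theorem~\ref{prop_zerosum} in place of Theorem~\ref{th_global}). Since $M(\T_{k+1}) = M(F(\T_k)) \leq M(\T_k)$ and $M$ is bounded below, the real sequence $M(\T_k)$ is nonincreasing and converges to some $m^\ast$.

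First I would establish the key lemma: every accumulation point of $\{\T_k\}$ is a fixed point of $F$. Because the iterates are bounded, accumulation points exist; let $\T_{k_j} \to \T^\ast$. Continuity of $M$ gives $M(\T^\ast) = m^\ast$, and continuity of the update map $F$ (satisfied by every algorithm in $\cA$, since the losses are $C^2$ and the updates are rational in $\T$) gives $\T_{k_j+1} = F(\T_{k_j}) \to F(\T^\ast)$, hence $M(F(\T^\ast)) = m^\ast = M(\T^\ast)$. By the defining property of a measure of progress, $M(F(\T^\ast)) = M(\T^\ast)$ forces $F(\T^\ast) = \T^\ast$. Invoking~\ref{R1}, $\T^\ast$ is then a critical point of the game.

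Next I would apply the structural result: by Theorem~\ref{th_global}, the only critical point of $\cM$ is the strict maximum $\bT$ at the origin. Hence $\T^\ast = \bT$ for every accumulation point, so the bounded sequence $\{\T_k\}$ has a unique accumulation point and therefore converges, $\T_k \to \bT$. This is precisely outcome~2 in Theorem~\ref{th_global}. But $F$ is reasonable, so for a suitable choice of hyperparameters (those furnished by~\ref{R2} for a bounded region containing $\T_0$) the set of initial points whose iterates converge to $\bT$ has Lebesgue measure zero; in particular, for almost every $\T_0$ with bounded iterates one has $\T_k \not\to \bT$, contradicting the previous line. Therefore no such $M$ exists, proving the corollary; the case of $\cN$ is the same, with the almost-sure avoidance of $\bT$ and the boundedness of iterates both supplied directly by Theorem~\ref{prop_zerosum}.

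The main obstacle is bookkeeping rather than a new idea: one must ensure the hyperparameter regime in which iterates stay bounded is compatible with the regime in which~\ref{R2} applies (for $\cN$ this coexistence is exactly the content of Theorem~\ref{prop_zerosum}; for $\cM$ it is the ``regular'' small--step--size behavior witnessed in Figure~\ref{fig_market}), and one must use continuity of $F$ --- or at least of $M \circ F$ at $\omega$-limit points --- to upgrade the monotone convergence of $M(\T_k)$ into the statement that accumulation points are fixed. All the genuine mathematical weight sits in Theorems~\ref{th_global} and~\ref{prop_zerosum}, which may be assumed here; the corollary is the short wrap-up converting ``bounded iterates $+$ measure of progress $\Rightarrow$ convergence to a critical point'' into a contradiction with the non-existence of non-maximal critical points.
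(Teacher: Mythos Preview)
Your proposal is correct and follows essentially the same approach as the paper: both run the standard descent/Lyapunov argument to show every accumulation point is a fixed point (hence a critical point by \ref{R1}), deduce convergence to the unique strict maximum, and obtain a contradiction with reasonableness via \ref{R2}. The paper's write-up is terser at the final step (it simply says ``this is in contradiction with the algorithm being reasonable''), whereas you spell out the measure-zero clause of \ref{R2} and flag the hyperparameter-compatibility issue; this extra care is welcome but not a different argument.
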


Assuming the algorithm to be reasonable is necessary: any map is a measure of progress for the unreasonable algorithm $F(\T) = \T$. Assuming the algorithm to have bounded iterates in $\cM$ or $\cN$ is necessary: $M(\T) = \exp\left(-\,\T \cdot \mathbf{1}\right)$ is a measure of progress for the reasonable but always-divergent algorithm $F(\T) = \T+\mathbf{1}$, where $\mathbf{1}$ is the constant vector of ones.

\section{Conclusion}

We have proven that global convergence is fundamentally at odds with weak, desirable requirements in multi-loss optimization. Any reasonable algorithm can cycle or diverge to infinite losses, even in two-player markets. This arises because coercive games, unlike losses, may have no critical points other than strict maxima. However, this is not the only point of failure: strict minima may have arbitrarily small regions of attraction, making convergence arbitrarily unlikely.

Limit cycles are not necessarily bad: they may even have game-theoretic significance \citep{Pap}. This paper nonetheless shows that some games have no satisfactory outcome in the usual sense, even in the class of two-player markets. Players should neither escape to infinite losses, nor converge to strict maxima or non-critical points, so cycling may be the lesser evil. The community is accustomed to optimization problems whose solutions are single points, but cycles may have to be accepted as solutions in themselves.

The hope for machine learning practitioners is that local minima with large regions of attraction prevent limit cycles from arising in applications of interest, including GANs. Proving or disproving this is an interesting and important avenue for further research, with real implications on what to expect when agents learn while interacting with others. Cycles may for instance be unacceptable in self-driving cars, where oscillatory predictions may have life-threatening implications.

\vfill
\pagebreak

\bibliography{include/references}
\bibliographystyle{include/iclr2021_conference}

\clearpage
\appendix

{\LARGE\sc Appendix}

\section{Algorithms and experiment hyperparameters}\label{app_algos}

Each algorithm in $\cA$ cited in the `Related Work' section can be defined as $F(\T) = \T-\al G(\T)$ for some continuous $G:\R^d \to \R^d$. We have already seen that simultaneous GD is given by $G_{GD} = \xi$. The only examples in this paper are two-player games, for which AGD is given by
\[ G_\text{AGD} = \begin{pmatrix}
\xi_1(\T_1, \T_2) \\ \xi_2(\T_1-\al\xi_1, \T_2)
\end{pmatrix} \]
The other algorithms are given by
\begin{align*}
&G_\text{EG} = \xi\circ(\id-\al\xi) &\qquad
&G_\text{OMD} = 2\xi(\T_k)-\xi(\T_{k-1}) \\
&G_\text{SGA} = (I+\la A^\t)\xi &\qquad
&G_\text{CO} = (I+\ga H^\t)\xi\\
&G_\text{CGD} = (I+\al H_o)^{-1}\xi &\qquad
&G_\text{LA} = (I-\al H_o)\xi \\
&G_\text{LOLA} = (I-\al H_o)\xi-\al\diag(H_o^\t\nabla L) &\qquad
&G_\text{SOS} = (I-\al H_o)\xi-p\al\diag(H_o^\t\nabla L) \,.
\end{align*}
For OMD, the previous iterate can be uniquely recovered as $\T_{k-1} = (\id-\al\xi)^{-1}(\T_k)$ using the proximal point algorithm if $\norm{H} \leq L$ and $\al < 1/L$, giving
\[ G_{OMD} = 2\xi-\xi\circ(\id-\al\xi)^{-1} \,. \]

In all experiments we initialise $\T_0$ following a standard normal distribution and use a learning rate $\al = 0.01$, with $\ga = 0.01$ for CO. Learning rates $\al_i$ could be chosen to be different for each player $i$, but we set them to be equal throughout this paper for simplicity. Claims regarding the behavior of each algorithm for sufficiently small $\al$ mean that all $\al_i$ should be sufficiently small. The $\la$ parameter for SGA is obtained by the alignment criterion introduced in the original paper,
\[ \la = \sign\left(\langle \xi, H^\t \xi \rangle \langle A^\t \xi, H^\t \xi \rangle\right) \,. \]
Similarly, the $p$ parameter for SOS is given by a two-part criterion which need not be described here.

Accompanying code for all experiments can be found at \url{https://github.com/aletcher/impossibility-global-convergence}.

\section{Proof of Proposition \ref{prop_gd}}

We first prove a lemma and state a standard optimization result.

\begin{lemma}{0}\label{lem_lipschitz}
Let $G \in C^1(U, \R^d)$ for an open set $U$. If $G$ is $L$-Lipschitz then $\sup_{\T \in U} \norm{\nabla G(\T)} \leq L$.
\end{lemma}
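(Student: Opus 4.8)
The plan is to show that the operator norm of the Jacobian at every point is controlled by the Lipschitz constant, using the definition of the derivative as a limit of difference quotients along straight line segments. First I would fix an arbitrary $\T \in U$; since $U$ is open there is $r > 0$ with the ball $B(\T, r) \subseteq U$, so that $\T + tv \in U$ for every unit vector $v \in \R^d$ and every $0 < t < r$. This keeps all the points I need inside the domain where the Lipschitz bound applies.

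Next, since $G \in C^1(U, \R^d)$ it is in particular Fréchet differentiable at $\T$, so for each unit vector $v$ the directional derivative exists and equals
\[ \nabla G(\T)\, v = \lim_{t \to 0^+} \frac{G(\T + tv) - G(\T)}{t} \,. \]
The Lipschitz hypothesis gives $\norm{G(\T + tv) - G(\T)} \leq L \norm{tv} = Lt$ for $0 < t < r$, so every difference quotient on the right has norm at most $L$. Because the Euclidean norm is continuous, the limit also has norm at most $L$, i.e. $\norm{\nabla G(\T)\, v} \leq L$ for all unit $v$.

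Finally I would take the supremum over unit vectors: by definition of the induced operator (spectral) norm, $\norm{\nabla G(\T)} = \sup_{\norm{v} = 1} \norm{\nabla G(\T)\, v} \leq L$. Since $\T \in U$ was arbitrary, taking the supremum over $\T$ yields $\sup_{\T \in U} \norm{\nabla G(\T)} \leq L$, as claimed. There is no real obstacle here; the only points requiring a word of care are that openness of $U$ guarantees the perturbed points $\T + tv$ stay in $U$, and that $C^1$ regularity (rather than mere directional differentiability) is what lets us identify the limit of the difference quotients with $\nabla G(\T)\, v$. One could equivalently run the argument through the fundamental theorem of calculus along the segment $t \mapsto G(\T + tv)$, but the difference-quotient version is the shortest.
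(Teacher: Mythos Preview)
Your proof is correct and follows essentially the same approach as the paper: fix $\T$, use openness of $U$ to keep perturbed points inside the domain, bound the difference quotients by the Lipschitz constant, pass to the limit to bound $\norm{\nabla G(\T)\,v}$ for every unit $v$, then take the supremum over unit vectors and over $\T$. The only cosmetic difference is that the paper phrases the limiting step via a Taylor expansion with an auxiliary $\ep$, whereas you invoke the limit definition of the directional derivative directly; your formulation is arguably the cleaner of the two.
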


The proof is an adaptation of \cite[Lemma 7]{Pan} for non-convex sets.

\begin{proof}
Fix any $\T \in U$ and $\ep > 0$. Since $U$ is open, the ball $B_r(\T)$ of radius $r$ centered at $\T$ is contained in $U$ for some $r > 0$. By Taylor expansion, for any unit vector $\T'$,
\[ \norm{G(\T+r\T')-G(\T)} \geq r\norm{\nabla G(\T)\T'}-o(r) \geq r\norm{\nabla G(\T)\T'}-\ep r \]
for $r$ sufficiently small. Since $G$ is $L$-Lipschitz, we obtain
\[ r\norm{\nabla G(\T)\T'} \leq \norm{G(\T+r\T')-G(\T)}+r\ep \leq r(L+\ep) \,. \]
Since $\ep$ was arbitrary, $\norm{\nabla G(\T)\T'} \leq L$ for any unit $\T'$. By definition of the norm, we obtain
\[ \norm{\nabla G(\T)} = \sup_{\norm{\T'}=1} \norm{\nabla G(\T)\T'} \leq L \]
for all $\T \in U$ and hence $\sup_{\T \in U} \norm{\nabla G(\T)} \leq L$.
\end{proof}

\begin{Proposition*}[{\cite[Prop. 12.4.4]{Lan} and \cite[Th. 4.1]{Abs}}]
Assume $f$ has $L$-Lipschitz gradient and is either analytic or has isolated critical points. Then for any $0 < \alpha < 2/L$ and $\T_0 \in \R^d$ we have
$$\lim_{k} \norm{\T_k} = \infty \quad \text{or} \quad \lim_k \T_k = \bT$$
for some critical point $\bT$. If $f$ moreover has compact sublevel sets then the latter holds, $\lim_k \T_k = \bT$.
\end{Proposition*}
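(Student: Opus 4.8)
The plan is to combine the classical ``sufficient decrease'' estimate for gradient descent with a Lojasiewicz-type argument that upgrades subsequential accumulation at critical points into genuine convergence; this is essentially \cite[Prop.~12.4.4]{Lan} and \cite[Th.~4.1]{Abs}, and I would reconstruct it as follows.

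Step one is the descent lemma. Since $\nabla f$ is $L$-Lipschitz, the quadratic upper bound applied to $\T_{k+1} = \T_k - \al\nabla f(\T_k)$ gives
\[ f(\T_{k+1}) \le f(\T_k) - \al\bigl(1 - \tfrac{L\al}{2}\bigr)\norm{\nabla f(\T_k)}^2 , \]
and for $0 < \al < 2/L$ the constant $c := \al(1 - \tfrac{L\al}{2})$ is strictly positive. Hence $\{f(\T_k)\}$ is non-increasing, $f^\ast := \lim_k f(\T_k)$ exists in $[-\infty, f(\T_0)]$, telescoping yields $c\sum_k \norm{\nabla f(\T_k)}^2 \le f(\T_0) - f^\ast$, and $\norm{\T_{k+1} - \T_k} = \al\norm{\nabla f(\T_k)}$. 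Step two is the dichotomy: if $\norm{\T_k} \not\to \infty$, some subsequence converges to a point $\bT$, and $f^\ast$ is finite — immediate when $f$ has compact sublevel sets, since monotonicity traps every $\T_k$ in the compact set $U_0 = \{f \le f(\T_0)\}$, and by continuity of $f$ at $\bT$ in general. Then $\sum_k \norm{\nabla f(\T_k)}^2 < \infty$, so $\nabla f(\T_k) \to 0$ and $\norm{\T_{k+1} - \T_k} \to 0$; by continuity of $\nabla f$, every accumulation point of $(\T_k)$ is critical with value $f^\ast$, and in particular $\bT$ is critical.

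Step three is the upgrade from ``accumulation points are critical'' to ``the sequence converges'', which I expect to be the crux. When $f$ has compact sublevel sets the iterates lie in $U_0$, so the accumulation set $\Omega$ is nonempty, compact, and connected (the accumulation set of a bounded sequence whose consecutive differences vanish). If critical points are isolated, $\Omega$ is a connected subset of a discrete set, hence a single point $\bT$, and since the steps vanish $\T_k \to \bT$. If $f$ is analytic, I would invoke the Lojasiewicz gradient inequality at an accumulation point $\bT$ (after dispatching the trivial case $f(\T_k) = f^\ast$, which makes the iterates eventually constant): $\norm{\nabla f(\T)} \ge C\,\abs{f(\T) - f(\bT)}^{1-\rho}$ for some $\rho \in (0, \tfrac12]$ and $\norm{\T - \bT} \le \delta$. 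Feeding this into the descent estimate and using concavity of $t \mapsto t^{\rho}$ gives, once the iterates enter $B_\delta(\bT)$, a telescoping bound $\norm{\T_{k+1} - \T_k} \lesssim \bigl(f(\T_k) - f(\bT)\bigr)^{\rho} - \bigl(f(\T_{k+1}) - f(\bT)\bigr)^{\rho}$; a short induction shows the iterates never leave $B_\delta(\bT)$ once deep enough inside, and summing gives $\sum_k \norm{\T_{k+1} - \T_k} < \infty$, so $(\T_k)$ is Cauchy and converges. Without compact sublevel sets, the same local trapping argument around a bounded accumulation point forces convergence whenever $\norm{\T_k} \not\to \infty$, so the only alternative to convergence is divergence; and with compact sublevel sets this alternative is excluded, yielding $\lim_k \T_k = \bT$.

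The main obstacle is precisely step three: ``the accumulation set is a connected subset of the critical set'' does not on its own force convergence, and the two hypotheses — isolated critical points, or analyticity — are exactly what is needed to rule out the iterates drifting along a continuum of critical points, or oscillating without settling. The isolated case is a soft topological argument, but the analytic case genuinely requires the Lojasiewicz inequality together with the telescoping/trapping estimate. A secondary subtlety is the non-coercive dichotomy, i.e. establishing that the only alternative to convergence is $\norm{\T_k} \to \infty$ (so that no bounded but non-convergent orbit occurs), which again reduces to applying the argument locally around any bounded accumulation point after first checking $f^\ast$ is finite there.
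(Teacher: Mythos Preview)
The paper does not actually prove this proposition: it is stated with attribution to \cite[Prop.~12.4.4]{Lan} and \cite[Th.~4.1]{Abs} and then used as a black box in the proof of Proposition~\ref{prop_gd}. So there is nothing to compare against beyond the cited references themselves.

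Your reconstruction is correct and is precisely the standard argument those references contain: the descent lemma gives summable squared gradients and vanishing steps; any bounded accumulation point is critical; isolation or the Lojasiewicz inequality then upgrades accumulation to convergence via a local trapping argument. One small point worth tightening: your connectedness-of-$\Omega$ argument for the isolated case tacitly uses boundedness of the whole sequence, which you only have under compact sublevel sets. For the bare dichotomy you should instead argue locally --- if $\bT$ is an isolated critical accumulation point and the iterates do not converge to it, then (since steps vanish) they must accumulate somewhere in a small punctured ball around $\bT$, contradicting isolation. You gesture at this with ``the same local trapping argument'', but it is worth making explicit that the isolated case admits its own local version not requiring global boundedness.
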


We can now prove Proposition \ref{prop_gd}, which avoids requiring Lipschitz continuity by proving that iterates are contained in the sublevel set given by $\T_0$ for appropriate learning rate $\al$.

\begin{proposition}{\ref{prop_gd}}
Assume $f \in C^2$ has compact sublevel sets and is either analytic or has isolated critical points. For any $\T_0 \in \R^d$, define $U_0 = \{ f(\T) \leq f(\T_0) \}$ and let $L < \infty$ be a Lipschitz constant for $\nabla f$ in $U_0$. Then for any $0 < \al < 2/L$ we have $\lim_k \T_k = \bT$ for some critical point $\bT$.
\end{proposition}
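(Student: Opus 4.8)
The plan is to deduce the statement from the cited Proposition$^*$ by first showing that the iterates never leave the compact sublevel set $U_0 = \{ f(\T) \leq f(\T_0) \}$, so that only the Lipschitz constant $L$ of $\nabla f$ \emph{on $U_0$} is ever needed. The crux is the confinement claim: if $\T_k \in U_0$ then $\T_{k+1} \in U_0$ and $f(\T_{k+1}) \leq f(\T_k)$. I would prove this by a one‑dimensional continuity argument along the update segment. If $\nabla f(\T_k) = 0$ the step is trivial; otherwise put $v = \nabla f(\T_k) \neq 0$, $\phi(t) = f(\T_k - t\al v)$ for $t \in [0,1]$, so $\phi'(0) = -\al\norm{v}^2 < 0$, and let $t^* = \sup\{ s \in [0,1] : \phi(t) \leq f(\T_k) \text{ for all } t \leq s \}$, which is positive by the sign of $\phi'(0)$. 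For $t \in [0,t^*]$ we have $\phi(t) \leq f(\T_k) \leq f(\T_0)$, so the whole segment $\{\T_k - s\al v : s \in [0,t^*]\}$ lies in $U_0$; hence $\nabla f$ is $L$-Lipschitz along it, and Taylor expansion with integral remainder gives the descent estimate
\[ \phi(t) \;\leq\; f(\T_k) - t\al\norm{v}^2\left(1 - \tfrac{1}{2}L\al t\right), \qquad 0 \leq t \leq t^*. \]
Since $\al < 2/L$, the bracket is strictly positive for $t \in (0,1]$, so $\phi(t) < f(\T_k)$ strictly on $(0,t^*]$; continuity of $\phi$ then forces $t^* = 1$, since otherwise $\phi$ would stay below $f(\T_k)$ in a neighbourhood of $t^*$, contradicting the definition of the supremum. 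Therefore $\T_{k+1} = \T_k - \al v \in U_0$ with $f(\T_{k+1}) < f(\T_k)$, and induction on $k$ confines the entire trajectory to $U_0$.

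With the trajectory confined to the compact set $U_0$, the remaining convergence argument is exactly that of the cited Proposition$^*$, with its global Lipschitz hypothesis replaced by the $U_0$-local constant $L$: the values $f(\T_k)$ are decreasing and bounded below (continuity on compact $U_0$), so summing the descent estimate at $t = 1$ yields $\sum_k \norm{\nabla f(\T_k)}^2 < \infty$, whence $\norm{\T_{k+1} - \T_k} = \al\norm{\nabla f(\T_k)} \to 0$. The accumulation set $\Omega$ is then a nonempty, connected subset of $U_0$ on which $\nabla f$ vanishes. If critical points are isolated, $\Omega$ is a single point and $\T_k \to \bT$; if $f$ is analytic, the {\L}ojasiewicz gradient inequality near any point of $\Omega$ upgrades $\sum_k \norm{\nabla f(\T_k)}^2 < \infty$ to finite trajectory length $\sum_k \norm{\T_{k+1} - \T_k} < \infty$, so the iterates are Cauchy and converge. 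These are precisely the two cases established in \cite{Lan,Abs}, now applied verbatim inside $U_0$.

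I expect the confinement step to be the main obstacle, and the point that needs genuine care: the Lipschitz bound on $\nabla f$ is assumed only on $U_0$, so one cannot a priori invoke the descent lemma on the segment $[\T_k, \T_{k+1}]$ before knowing that segment lies in $U_0$ — which is what is being proved. The supremum device above is what breaks the circularity, and in writing it out I would be careful that (i) the degenerate case $\nabla f(\T_k) = 0$ is handled separately, (ii) the quadratic estimate is applied only on $[0,t^*]$, where membership in $U_0$ is already known, and (iii) the strict inequality $\al < 2/L$ (rather than $\leq$) is used, so that $t^*$ cannot be a point where $\phi$ merely touches the level $f(\T_k)$. The only other ingredient, the connectedness of $\Omega$, is the standard fact that a bounded sequence with $\norm{\T_{k+1} - \T_k} \to 0$ has connected set of limit points, and is routine.
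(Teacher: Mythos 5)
Your proposal is correct, and it reaches the same intermediate goal as the paper --- confinement of the iterates to the sublevel set $U_0$, followed by an appeal to the cited result of \cite{Lan,Abs} applied inside $U_0$ --- but it gets the confinement step by a genuinely different mechanism. The paper works globally: it introduces the swept set $U_\al = \{\T - t\al\nabla f(\T) \mid t\in[0,1],\ \T\in U_0\}$ and the quantity $L(\al)=\sup_{U_\al}\norm{\nabla^2 f}$, shows $\al L(\al)<2 \implies U_\al = U_0$, and then runs a continuity/intermediate-value argument \emph{in the step size} $\al$ to rule out $\al L(0)<2$ with $\al L(\al)\geq 2$, using Lemma \ref{lem_lipschitz} to relate the Lipschitz constant $L$ to the Hessian bound $L(0)$. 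You instead argue per iterate, along the single segment $[\T_k,\T_{k+1}]$, with a first-exit-time parameter $t^*$ and the integral-remainder descent estimate applied only on $[0,t^*]$ where membership in $U_0$ is already known; the strict inequality $\al<2/L$ then forces $t^*=1$. This is a more elementary route: it uses the Lipschitz constant on $U_0$ directly (no Hessian bound, no Lemma \ref{lem_lipschitz}) and avoids the continuity of $\al\mapsto L(\al)$, which the paper asserts without proof and which is the most delicate point of its argument; you also correctly identify and defuse the circularity (needing the segment in $U_0$ before the Lipschitz bound may be invoked) that both proofs must break. What the paper's version buys in exchange is the slightly stronger geometric statement that $U_\al=U_0$ simultaneously for every admissible $\al$, rather than step-by-step confinement of one trajectory. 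Your concluding convergence argument (summability of $\norm{\nabla f(\T_k)}^2$, vanishing steps, connected accumulation set, then isolated critical points or the {\L}ojasiewicz inequality) is a sound unpacking of the cited Proposition restricted to $U_0$, and is no less rigorous than the paper's one-line invocation of that result.
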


\begin{proof}
Note that $\nabla f \in C^1$, so $f$ has $L$-Lipschitz gradient inside any compact set $U$ for some finite $L$, and $\sup_{\T \in U} \lVert \nabla^2 f(\T) \rVert \leq L$ by Lemma \ref{lem_lipschitz}. Now define $U_\al = \{ \T-t\al\nabla f(\T) \mid t \in [0,1], \T\in U_0 \}$ and the continuous function $L(\al) = \sup_{\T \in U_\al} \norm{\nabla^2 f(\T)}$. Notice that $U_0 \subset U_{\al'}$ for all $\al$. We prove that $\al L(\al) < 2$ implies $U_\al = U_0$ and in particular, $L(\al) = L(0)$. By Taylor expansion,
\[ f(\T-t\al\nabla f) = f(\T) - \al \norm{\nabla f(\T)}^2 + \frac{t^2\al^2}{2}\nabla f(\T)^T\nabla^2 f(\T-t'\al\nabla f)f(\T) \]
for some $t' \in [0,t] \subset [0,1]$. Since $\T-t'\al\nabla f \in U_\al$, it follows that
\[ f(\T-t\al\nabla f) \leq f(\T) -\al\norm{\nabla f(\T)}^2(1-\al L(\al)/2) \leq f(\T)  \]
for all $\al L(\al) < 2$. In particular, $\T-t\al\nabla f \in U_0$ and hence $U_\al = U_0$. We conclude that $\al L(\al) < 2$ implies $L(\al)=L(0)$, implying in turn $\al L(0) < 2$. We now claim the converse, namely that $\al L(0) < 2$ implies $\al L(\al) < 2$. For contradiction, assume otherwise that there exists $\al' L(0) < 2$ with $\al'L(\al') \geq 2$. Since $\al L(\al)$ is continuous and $0 L(0) = 0 < 2$, there exists $\bal \leq \al'$ such that $\bal L(0) < 2$ and $\bal L(\bal) = 2$. This is in contradiction with continuity:
\[ 2 = \bal L(\bal) = \lim_{\al\to\bal^-} \al L(\al) = \lim_{\al\to\bal^-} \al L(0) = \bal L(0) \,. \]
Finally we conclude that $U_\al = U_0$ for all $\al L(0) < 2$, and in particular, for all $\al L < 2$. Finally, $\T_k \in U_0$ implies $\T_{k+1} \in U_\al = U_0$ and hence $\T_k \in U_0$ by induction. The result now follows by applying the previous proposition to $f|_{U_0}$.
\end{proof}

\section{Proof of Proposition \ref{prop_concepts}}

\begin{proposition}{\ref{prop_concepts}}
Write $\la(A) = \Re(\Spec(A))$ for real parts of the eigenvalues of a matrix $A$. We have the following implications, and none of them are equivalences.
\[ \begin{tikzcd}[row sep=15pt, column sep=15pt]
& \max \la(H) < 0 \arrow[r, Rightarrow] \arrow[ddr, Rightarrow] & \min \la(H) < 0 \arrow[dr, Rightarrow] & \\
H \prec 0 \arrow[ur, Rightarrow] \arrow[dr, Rightarrow] & & & \min \la(S) < 0  \\
& \max \la(H_d) < 0 \arrow[r, Rightarrow] \arrow[uur, Rightarrow] & \min \la(H_d) < 0 \arrow[ur, Rightarrow] &
\end{tikzcd} \]
\end{proposition}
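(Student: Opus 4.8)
The plan is to derive all eight arrows from three elementary observations about the decompositions $H = S + A$ into symmetric and antisymmetric parts and $H = H_d + H_o$ into block-diagonal and off-diagonal parts, and then to refute all eight reverse implications with three families of quadratic two-player games (each player one-dimensional, so that $H$ is a constant $2\times 2$ matrix). I would note at the outset that each $\nabla_{ii}L^i$ is a genuine Hessian, so $H_d$ is symmetric and equals the block-diagonal part of $S$, while $H_o$ has vanishing diagonal, so $\Tr(H) = \Tr(H_d)$.

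First I would record the field-of-values bound: for any eigenvalue $\la$ of $H$ with complex unit eigenvector $v$, the real antisymmetric part contributes only an imaginary quantity, so $\Re(\la) = v^* S v \in [\min\la(S),\,\max\la(S)]$ by the Rayleigh quotient for the symmetric matrix $S$; hence $\min\la(S) \le \min\la(H)$ and $\max\la(H) \le \max\la(S)$. Since $H \prec 0 \iff S \prec 0 \iff \max\la(S) < 0$, this gives $H\prec0 \Rightarrow \max\la(H)<0$ and $\min\la(H)<0 \Rightarrow \min\la(S)<0$. Restricting the Rayleigh quotient of $S$ to vectors supported in a single block gives likewise $\min\la(S) \le \min\la(H_d)$ and $\max\la(H_d) \le \max\la(S)$, yielding $H\prec0 \Rightarrow \max\la(H_d)<0$ and $\min\la(H_d)<0 \Rightarrow \min\la(S)<0$. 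Next I would use the trace: for a real matrix, $\Tr$ equals the sum of real parts of eigenvalues, so if every eigenvalue of $H$ has negative real part then $0 > \Tr(H) = \Tr(H_d)$, forcing some eigenvalue of $H_d$ to have negative real part, i.e. $\max\la(H)<0 \Rightarrow \min\la(H_d)<0$; symmetrically $\max\la(H_d)<0 \Rightarrow \min\la(H)<0$. The remaining two arrows, $\max\la(H)<0 \Rightarrow \min\la(H)<0$ and $\max\la(H_d)<0 \Rightarrow \min\la(H_d)<0$, hold trivially since $\min\la\le\max\la$.

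For non-equivalence I would exhibit three constant-Hessian games. (a) $L^1 = -x^2/2+cxy$, $L^2 = -y^2/2$ gives $H = \begin{pmatrix}-1 & c\\ 0 & -1\end{pmatrix}$, so $\max\la(H)=\max\la(H_d)=-1<0$, while the symmetric part $\begin{pmatrix}-1 & c/2\\ c/2 & -1\end{pmatrix}$ is indefinite for $c>2$, so $H\not\prec0$; this breaks the reverses of the two arrows leaving $H\prec0$. (b) $L^1 = -x^2/2$, $L^2 = y^2/2$ gives $H = H_d = \diag(-1,1)$, so $\min\la(H)=\min\la(H_d)=-1<0$ while $\max\la(H)=\max\la(H_d)=1\not<0$; this breaks the reverses of all four arrows leading from a $\max$ node to a $\min$ node. (c) $L^1 = \ep x^2/2+2xy$, $L^2 = \ep y^2/2-xy$ gives $H = \begin{pmatrix}\ep & 2\\ -1 & \ep\end{pmatrix}$ with eigenvalues $\ep\pm i\sqrt2$, so $\min\la(H)=\min\la(H_d)=\ep>0$, while its symmetric part $\begin{pmatrix}\ep & 1/2\\ 1/2 & \ep\end{pmatrix}$ has eigenvalue $\ep-1/2<0$ for $\ep<1/2$, so $\min\la(S)<0$; this breaks the reverses of the two arrows entering $\min\la(S)<0$. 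Checking signs shows every one of the eight arrows is a strict implication.

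The hard part is really just example (c): I need $H$ to be close to a rotation so that both $\min\la(H)$ and $\min\la(H_d)$ are nonnegative while the symmetrization $S$ is still indefinite, which forces the two off-diagonal entries to have opposite signs but unequal magnitudes. Everything else is routine linear algebra together with the bookkeeping of matching each of the eight reverse implications to the counterexample that defeats it.
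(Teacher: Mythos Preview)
Your proof is correct and follows essentially the same approach as the paper: the forward implications rest on the same three ingredients (the Rayleigh/field-of-values bound $\min\la(S)\le\Re\la(H)\le\max\la(S)$, its block-restricted analogue for $H_d$, and the trace identity $\Tr H=\Tr H_d$ for the diagonal arrows), and the non-equivalences are refuted by explicit $2\times2$ Hessians. Your counterexamples are slightly more economical than the paper's---the triangular matrix in (a) and the near-rotation in (c) each kill two reverse arrows at once, whereas the paper uses a separate matrix for each---and you phrase them as actual quadratic games rather than bare matrices, but the underlying strategy is the same.
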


The top row is dynamics-based, governed by the collective Hessian, while the bottom row is game-theoretic whereby $H_d = \bigoplus \nabla_{ii}L^i$ decomposes into agentwise Hessians. The left and right triangles collide respectively to strict maxima and saddles for single losses, since $H = S = H_d = \nabla^2 L$.

\begin{proof}
First note that $H \prec 0 \iff S \prec 0 \iff \max \la(S) < 0$, so the leftmost term can be replaced by $\max \la(S) < 0$.

We begin with the leftmost implications. If $\max \la(S) < 0$ then $S \prec 0$ by symmetry of $S$, implying both $H \prec 0$ since $u^\t H u = u^\t S u$ for all $u \in \R^d$, and negative definite diagonal blocks $\nabla^2 L^ii \prec 0$; finally $H_d \prec 0$. In particular this implies $\max \la(H) < 0$ and $\max \la(H_d) \prec 0$ since real parts of eigenvalues of a negative definite matrix are negative.

The rightmost implications follow as above by contraposition: if $\min \la(S) \geq 0$ then $S \succeq 0$, which implies $H \succeq 0$ and $H_d \succeq 0$ and hence $\min \la(H) \geq 0$, $\min \la(H_d) \geq 0$.

The top and bottom implications are trivial.

The diagonal implications hold by a trace argument:
\[ \sum_i \la_i(H) = Tr(H) = \Tr(H_d) = \sum_i \la_i(H_d) \,, \]
hence $\max \la(H) < 0$ implies the LHS is negative and thus $\sum_i \la_i(H_d) < 0$. It follows that $\la_i(H_d) < 0$ for some $i$ and finally $\min \la(H_d) < 0$. The other diagonal holds identically.

We now prove that no implication is an equivalence. For the leftmost implications,
\[ H = \begin{pmatrix}
-1 & 2 \\ 2 & -1
\end{pmatrix} \]
has $\max \la(H_d) = -1 < 0$ while $\max \la(S) = 3 > 0$, and
\[ H = \begin{pmatrix}
2 & 4 \\ -4 & -4
\end{pmatrix} \]
has $\max \la(H) = -1 < 0$ while $\max \la(S) = 2 > 0$. This also proves the diagonal implications: the first matrix has $\min \la(H_d) = -1 < 0$ but $\max \la(H) = 3 > 0$, and the second matrix has $\min \la(H) = -1 < 0$ but $\max \la(H_d) = 2 > 0$.

For the rightmost implications, swap the sign of the diagonal elements for the two matrices above.

The top and bottom implications are trivially not equivalences:
\[ H = H_d = \begin{pmatrix}
1 & 0 \\ 0 & -1
\end{pmatrix} \]
has $\min \la(H) = \min \la(H_d) = -1 < 0$ but $\max \la(H) = \max \la(H_d) = 1 > 0$.
\end{proof}

\section{Proof of Theorem \ref{th_global}}\label{app_th_global}

The variable changes
\begin{equation}
(x', y') = (y, -x) \,, \qquad  (x',y') = (-y, x) \,, \qquad (x', y') = (-x, -y) \tag{$\dagger$}\label{quadrant}
\end{equation}
will be useful, taking the positive quadrant $x, y \geq 0$ to the other three.

\begin{theorem}{\ref{th_global}}
There is a coercive, nondegenerate, analytic two-player market $\cM$ whose only critical point is a strict maximum. In particular, algorithms only have four possible outcomes in $\cM$:
\begin{enumerate}
\item Iterates are unbounded, and all players diverge to infinite loss. [Not global]
\item Iterates are bounded and converge to the strict maximum. [Not reasonable]
\item Iterates are bounded and converge to a non-critical point. [Not reasonable]
\item Iterates are bounded but do not converge (cycle). [Not global]
\end{enumerate}
\end{theorem}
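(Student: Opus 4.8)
The theorem has two parts: exhibiting the market $\cM$ with all the stated properties, and the enumeration of outcomes. The latter is essentially a tautology once $\cM$ is in hand, so I would concentrate on the former. The easy properties come first. $\cM$ is a market by inspection, with $L^1(x)=x^6/6-x^2/2$, $L^2(y)=y^6/6-y^2/2$ and interaction $g_{12}(x,y)=xy+\tfrac14\!\left(\tfrac{y^4}{1+x^2}-\tfrac{x^4}{1+y^2}\right)=-g_{21}(y,x)$; it is analytic because $1+x^2$ and $1+y^2$ never vanish. Coercivity of $L^1$ I would prove by splitting into the regimes $|x|\ge|y|$ and $|y|\ge|x|$: in the first, $xy\ge -x^2$ and $-\tfrac{x^4}{4(1+y^2)}\ge -\tfrac{x^4}{4}$, leaving $L^1\ge x^6/6-x^4/4-3x^2/2\to\infty$; in the second, if $|x|$ stays bounded the term $\tfrac{y^4}{4(1+x^2)}$ dominates everything, while if $|x|\to\infty$ one absorbs the bilinear term into $\tfrac{y^4}{4(1+x^2)}$ by Young's inequality, again leaving a coercive polynomial in $x$ plus a nonnegative remainder. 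Since $L^2(x,y)=L^1(-y,x)$ and this substitution preserves $\norm{\T}$, coercivity and analyticity transfer to $L^2$.

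Next, $\xi=(\partial_x L^1,\partial_y L^2)$ vanishes at the origin, and a short computation gives $H(0)=\begin{pmatrix}-1&1\\-1&-1\end{pmatrix}$, which has determinant $2$ (nondegenerate) and symmetric part $-I\prec 0$, so the origin is a strict maximum. The crux is that it is the \emph{only} critical point. The variable changes $(\dagger)$ generate a cyclic group of order four under which the zero set of $\xi$ is invariant — one checks $\partial_y L^2(x,y)=-\partial_x L^1(-y,x)$, so each rotation permutes the two components of $\xi$ up to sign — hence any nonzero critical point has a rotate in the open first quadrant, and it suffices to exclude those (on the axes, $\partial_x L^1(0,y)=y$ and $\partial_y L^2(x,0)=-x$, so the origin is the only critical point there). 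The key identity is that the bilinear terms cancel in $x\,\partial_x L^1+y\,\partial_y L^2$, so at any critical point
\[ x^6+y^6-x^2-y^2 \;=\; \tfrac{x^2y^4}{2(1+x^2)^2}+\tfrac{x^4}{1+y^2}+\tfrac{y^4}{1+x^2}+\tfrac{x^4y^2}{2(1+y^2)^2}\;\le\; 2(x^4+y^4)\,, \]
which confines critical points to a bounded box.

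Inside that box I would extract sign constraints from $\partial_x L^1=0$ and $\partial_y L^2=0$ individually — in the positive quadrant the latter forces $y^5>y$, hence $y>1$, and a symmetric bootstrap bounds both coordinates tightly — and then finish with quantitative estimates showing that the curves $\{\partial_x L^1=0\}$ and $\{\partial_y L^2=0\}$ cannot meet in the remaining region. This last step is the main obstacle I anticipate: the rational perturbations are genuinely not small in that region, so the system does not collapse to anything with transparent structure, and the exclusion seems to require somewhat delicate case-by-case estimates.

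Finally, the four outcomes follow mechanically. Iterates $\T_k$ are bounded or not; if unbounded, coercivity forces $L^i(\T_k)$ to be unbounded above for every $i$, incompatible with globality (outcome 1). If bounded, they converge or they do not; a bounded non-convergent sequence is by definition a cycle, again incompatible with globality (outcome 4). If $\T_k\to\bT$ and $F$ is continuous then $F(\bT)=\bT$, so $\bT$ is either the unique critical point — the strict maximum, which on a non-null set of initial conditions contradicts \ref{R2} (outcome 2) — or a non-critical fixed point, contradicting \ref{R1} (outcome 3).
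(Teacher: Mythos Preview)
Your outline is correct and largely parallels the paper's own argument: the same symmetry reduction to the open first quadrant, the same observation that $\xi_2=0$ forces $y>1$, and the same recognition that the remaining exclusion is the hard part. Your identity $x\xi_1+y\xi_2=0$ at a critical point, yielding the bounded region $x^6+y^6-x^2-y^2\le 2(x^4+y^4)$, is a clean addition that the paper does not use; the paper instead obtains $y<1.5$ directly by first ruling out $x\ge y$ (bounding $\xi_1$ below by a one-variable polynomial with positive minimum) and then using $x<y$ in $\xi_2$.

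However, the step you flag as ``the main obstacle'' and leave open is precisely the heart of the proof, so as it stands your proposal is an outline rather than a proof. The paper completes it as follows: having confined to $1<y<1.5$, one checks that $y\mapsto\xi_1(x,y)$ is \emph{concave} on this interval for every fixed $x\ge0$ (the second derivative is manifestly nonpositive for $y>1$), so it suffices to show $\xi_1(x,1)>0$ and $\xi_1(x,1.5)>0$ for all $x\ge0$. Each of these reduces, after bounding $\tfrac{x}{2(1+x^2)^2}\le 3\sqrt{3}/32$, to a single-variable polynomial inequality whose minimum can be located exactly and checked to be positive. The paper also notes an alternative route that sidesteps all of this: clear denominators to get a polynomial system, take the resultant in $y$ to obtain a univariate polynomial in $x$, and count its real roots by Sturm's theorem (the answer is one, namely $x=0$). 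Either completion would close the gap you identified.
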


For intuition purposes, $\cM$ was constructed by noticing that there is no necessary reason for the local minima of two coercive losses to coincide: the gradients of each loss may only \emph{simultaneously} vanish at a local maximum in each player’s respective coordinate. The highest-order terms (first and last) provide coercivity in both coordinates while still having zero-sum interactions. The $-x^2$ and $-y^2$ terms yield a strict local maximum \emph{at} the origin, while the $\pm xy$ terms provide opposite incentives \emph{around} the origin, preventing any other simultaneous critical point to arise.

\begin{proof}
Write $\T = (x,y)$ and consider the analytic market $\cM$ given by
\begin{align*}
L^1 &= x^6/6-x^2/2+xy+\frac{1}{4}\left(\frac{y^4}{1+x^2}-\frac{x^4}{1+y^2}\right) \\
L^2 &= y^6/6-y^2/2-xy-\frac{1}{4}\left(\frac{y^4}{1+x^2}-\frac{x^4}{1+y^2}\right)
\end{align*}
with simultaneous gradient
\[ \xi = \begin{pmatrix}
x^5-x+y-\frac{y^4x}{2(1+x^2)^2}-\frac{x^3}{1+y^2} \\[10pt]
y^5-y-x-\frac{x^4y}{2(1+y^2)^2}-\frac{y^3}{1+x^2}
\end{pmatrix} \,. \]
We prove `by hand' that the origin $\bT = 0$ is the only critical point (solution to $\xi=0$). See further down for an easier approach based on Sturm's theorem, computer-assisted though equally rigorous.

We can assume $x,y \geq 0$ since any other solution can be obtained by a quadrant variable change (\ref{quadrant}). Now assume for contradiction that $\xi = 0$ with $y \neq 0$.

\paragraph{1.} We first show that $y > 1$. Indeed,
\[ 0 = \xi_2 = y^5-y-x-\frac{x^4y}{2(1+y^2)^2}-\frac{y^3}{1+x^2} < y^5-y = y(y^4-1) \]
implies $y > 1$ since $y \geq 0$.

\paragraph{2.} We now show that $y < 1.5$. First assume for contradiction that $x \geq y$, then
\[ \xi_1 = y-x+x^5-\frac{xy^4}{2(1+x^2)^2}-\frac{x^3}{1+y^2} > 1-x+x^5-x^5/8-x^3/2 \coloneqq h(x) \,. \]
Now
\[ h'(x) = \frac{35}{8}x^4-\frac{3}{2}x^2-1 \]
has unique positive root
\[ x_0 = \sqrt{\frac{6+2\sqrt{79}}{35}} \]
and $h(x) \to \infty$ as $x \to \infty$, hence $h$ attains its minimum at $x_0$ and plugging $x_0$ yields a contradiction
\[ \xi_1 > h(x_0) > 0 \,. \]
We conclude that $x < y$, but combining this with $x \geq 0$ yields
\[ \xi_2 > -2y+y^5-y^5/8-y^3 = y(7y^4/8-y^2-2) > 7y^4/8-y^2-2 > 0 \]
for all $y \geq 1.5$, since the rightmost polynomial is positive at $y=1.5$ and has positive derivative
\[ 7y^3/2-2y = y(7y^2/2-2) \geq 7(1.5)^2/2-2 > 0 \,. \]
We must therefore have $y < 1.5$ as required.

\paragraph{3.} It remains only to show that $\xi_1 > 0$ for all $1 < y < 1.5$. First notice that $f_x(y) = \xi_1(x,y)$ is concave in $y$ for any fixed $x \geq 0$ since
\[ f_x'(y) = 1-\frac{2y^3x}{(1+x^2)^2}+2x^3\frac{y}{(1+y^2)^2} \]
and so
\[ f_x''(y) = -\frac{6y^2x}{(1+x^2)^2}+2x^3\frac{1+y^2-4y^2}{(1+y^2)^3} = -\frac{6y^2x}{(1+x^2)^2}-2x^3\frac{3y^2-1}{(1+y^2)^3} \leq 0 \]
for $y > 1$. It follows that $f_x$ attains its infimum on the boundary $y \in \{1, 1.5\}$, so it suffices to check that $\xi_1(x,1) > 0$ and $\xi_1(x,1.5) > 0$ for all $x \geq 0$. First notice that
\[ g(x) \coloneqq \frac{x}{2(1+x^2)^2} \]
satisfies
\[ g'(x) = \frac{1+x^2-4x^2}{2(1+x^2)^2} = \frac{1-3x^2}{2(1+x^2)^2} \,, \]
which has a unique positive root at $x_0 = 1/\sqrt{3}$. This critical point of $g$ must be a maximum since $g(x) > 0 $ for $x>0$ and $g(x) \to 0$ as $x \to \infty$. It follows that
\[ g(x) \leq g(x_0) = \frac{1}{2\sqrt{3}(1+1/3)^2} = 3\sqrt{3}/32 \,. \]
We now obtain
\[ \xi_1(x, 1) \geq x^5-x^3/2-x+1-3\sqrt{3}/32 \coloneqq p(x) \]
and
\[ \xi_1(x, 1.5) \geq x^5-4x^3/13-x+1.5-(1.5)^4 3\sqrt{3}/32 \coloneqq q(x) \,. \]
Notice that
\[ p'(x) = 5x^4-3x^2/2-1 \]
has unique positive root
\[ x_0 = \sqrt{\frac{3+\sqrt{89}}{20}} \]
and $p(x) \to \infty$ as $x \to \infty$, hence $p$ attains its minimum at $x_0$ and plugging $x_0$ yields
\[ \xi_1(x,1) \geq p(x_0) > 0 \,. \]
Similarly for $q$ we have
\[ q'(x) = 5x^4-12x^2/13-1 \]
has unique positive root
\[ x_0 = \sqrt{\frac{6+\sqrt{881}}{65}} \]
and plugging $x_0$ yields
\[ \xi_1(x,1.5) \geq q(x_0) > 0 \,. \]
We conclude that
\[ \xi_1(x,y) \geq \min(\xi_1(x,1), \xi_1(x,1.5)) > 0 \]
and the contradiction is complete, hence $y=0$. Finally $\xi_2 = 0 = x$, so $\bT = 0$ is the unique critical point as required. Now the Hessian at $\bT$ is
\[ H(\bT) = \begin{pmatrix}
-1 & 1 \\
-1 & -1
\end{pmatrix} \,, \]
which is negative definite since $S(\bT) = -I \prec 0$, so $\bT$ is a nondegenerate strict maximum and $\cM$ is nondegenerate. It remains only to prove coercivity of $\cM$, namely coercivity of $L^1$ and $L^2$. Coercivity of $L^1$ follows by noticing that the dominant terms are $x^6/6$ and $y^4/(1+x^2)$. Formally, first note that $\frac{x^4}{1+y^2} \leq x^4$, hence
\[ L^1 \geq x^6/6-x^4/4-x^2/2+xy+\frac{1}{4}\left(\frac{y^4}{1+x^2}\right)\,. \]
Now $xy \geq -|xy| \geq -(2x^2+y^2/8)$ by Young's inequality, hence
\[ L^1 \geq x^6/6-x^4/4-5x^2/2-y^2/8+\frac{1}{4}\left(\frac{y^4}{1+x^2}\right) \,. \]
For any sequence $\norm{\T} \to \infty$, either $|x| \to \infty$ or $|x|$ is bounded above by some $k \in \R$ and $|y| \to \infty$. In the latter case, we have
\[ \lim_{\norm{\T} \to \infty} L^1 \geq \lim_{|y| \to \infty} -k^4/4-5k^2/2-y^2/8+\frac{y^4}{4(1+k^2)} = \infty \]
since the leading term $y^4$ is of even degree and has positive coefficient, so we are done. Otherwise, for $|x| \to \infty$, we pursue the previous inequality to obtain
\[ L^1 \geq x^6/6-x^4/4-5x^2/2+\frac{y^2}{8}\left(\frac{2y^2}{1+x^2}-1\right) \,. \]
Now notice that $y^2 \geq x^2 \geq 1$ implies
\[ L^1 \geq x^6/6-x^4/4-5x^2/2+\frac{x^2}{8}\left(\frac{x^2-1}{1+x^2}\right) \geq x^6/6-x^4/4-5x^2/2-x^2/8 \,. \]
On the other hand, $x^2 \geq y^2$ also implies
\[ L^1 \geq x^6/6-x^4/4-5x^2/2-x^2/8 \]
by discarding the first (positive) term in the brackets. Both cases lead to the same inequality and hence, for any sequence with $|x| \to \infty$,
\[ \lim_{\norm{\T} \to \infty} L^1 \geq \lim_{|x| \to \infty} x^6/6-x^4/4-5x^2/2-x^2/8 = \infty \]
since the leading term $x^6$ has even degree and positive coefficient. Hence $L^1$ is coercive, and the same argument holds for $L^2$ by swapping $x$ and $y$. As required we have constructed a coercive, nondegenerate, analytic two-player market $\cM$ whose only critical point is a strict maximum.

In particular, any algorithm either has unbounded iterates with infinite losses or bounded iterates. If they are bounded, they either fail to converge or converge. If they converge, they either converge to a non-critical point or a critical point, which can only be the strict maximum.

[For an alternative proof that $\bT=0$ is the only critical point, we may take advantage of computer algebra systems to find the exact number of real roots using the resultant matrix and Sturm's theorem. Singular \citep{Singular} is one such free and open-source system for polynomial computations, backed by published computer algebra references. In particular, the \texttt{rootsur} library used below is based on the book by \citet{Bas}. First convert the equations into polynomials:
\[\begin{cases*}
2(1+x^2)^2(1+y^2)(x^5-x+y)-y^4x(1+y^2)-2x^3(1+x^2)^2 = 0 \\
2(1+y^2)^2(1+x^2)(y^5-y-x)-x^4y(1+x^2)-2y^3(1+y^2)^2 = 0 \,.
\end{cases*}\]
We compute the resultant matrix determinant of the system with respect to $y$, a univariate polynomial $P$ in $x$ whose zeros are guaranteed to contain all solutions in $x$ of the initial system. We then use the Sturm sequence of $P$ to find its exact number of real roots. This is implemented with the  Singular code below, whose output is $1$.

\begin{figure*}[h]
\centering\small
\begin{BVerbatim}
LIB "solve.lib"; LIB "rootsur.lib";
ring r = (0,x),(y),dp;
poly p1 = 2*(1+x^2)^2*(1+y^2)*(x^5-x+y)-y^4*x*(1+y^2)-2*x^3*(1+x^2)^2;
poly p2 = 2*(1+y^2)^2*(1+x^2)*(y^5-y-x)-x^4*y*(1+x^2)-2*y^3*(1+y^2)^2;
ideal i = p1,p2;
poly f = det(mp_res_mat(i));
ring s = 0,(x,y),dp; poly f = imap(r, f);
nrroots(f);
\end{BVerbatim}
\end{figure*}

We know that $\bT = 0$ is a real solution, so $\bT$ must be the unique critical point.]
\end{proof}

\section{Proof of Theorem \ref{th_global_min}}\label{app_th_global_min}

\begin{theorem}{\ref{th_global_min}}
Given a reasonable algorithm with bounded continuous distribution on $\T_0$ and a real number $\ep > 0$, there exists a coercive, nondegenerate, almost-everywhere analytic two-player market $\cM_\sig$ with a strict minimum and no other critical points, such that $\T_k$ either cycles or diverges to infinite losses for both players with probability at least $1-\ep$.
\end{theorem}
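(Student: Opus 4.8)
The plan is to mirror the proof of Theorem~\ref{th_global}, exploiting that $\cM_\sig$ has been designed to agree with $\cM$ away from the ball $B_\sig := \{\norm{\T} < \sig\}$ while carrying a strict minimum at the origin inside it; one reads off directly that $\cM_\sig$ is a market, with $g_{12} = f_\sig(x,y)+xy+\tfrac14\!\left(\tfrac{y^4}{1+x^2}-\tfrac{x^4}{1+y^2}\right) = -g_{21}$. First I would verify the stated properties of $\cM_\sig$. Almost-everywhere analyticity is immediate: both branches of $f_\sig$ are polynomials and the remaining terms are analytic, so $\cM_\sig$ is analytic off the circle $\norm{\T}=\sig$, a Lebesgue-null set across which $f_\sig$ is merely continuous (not $C^1$). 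Coercivity follows as for $\cM$: on $\{\norm{\T}>\sig\}$ one has $L^1 = L^1_\cM + y^2/2 - \sig^2/2$ and $L^2 = L^2_\cM - x^2/2 + \sig^2/2$, and in the latter the $-x^2/2$ is dominated by the $x^4/(4(1+y^2))$ term from the last bracket, so both losses stay coercive; inside $B_\sig$ everything is bounded. For the critical points, a one-line computation gives $\partial_x f_\sig = x$ and $\partial_y f_\sig = y$ on $\{\norm{\T}>\sig\}$, so $-x^2+f_\sig$ in $L^1$ and $-f_\sig$ in $L^2$ reproduce exactly the $-x$ and $-y$ terms of $\xi_\cM$; hence $\xi_{\cM_\sig} = \xi_\cM$ on $\{\norm{\T}>\sig\}$, which by Theorem~\ref{th_global} vanishes nowhere there. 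On $\{0 < \norm{\T} \le \sig\}$ the rescaling $\T = \sig\mathbf{u}$ makes the quadratic-scale part dominate for small $\sig$, and near the origin $\xi$ reduces to $(x+y,\,y-x)$, which vanishes only at $0$; other zeros on the punctured disk are ruled out for $\sig < 0.1$ by the same hand (or Sturm/resultant) computation as in Theorem~\ref{th_global}. Finally the game Hessian at the origin is $\begin{pmatrix} 1 & 1 \\ -1 & 1 \end{pmatrix}$, with symmetric part $I \succ 0$ and determinant $2$, so the origin is a nondegenerate strict minimum and $\cM_\sig$ is nondegenerate.

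\textbf{Reduction to a measure estimate.} Every algorithm under consideration is a function of $\xi$, $H$ and past iterates, so since $\xi_{\cM_\sig} = \xi_\cM$ (hence $H_{\cM_\sig} = H_\cM$) on the open set $\{\norm{\T}>\sig\}$, the $\cM_\sig$- and $\cM$-orbits from a common $\T_0$ coincide up to the first time they enter $B_\sig$. Because $F$ is continuous and the algorithm reasonable, any convergent $\cM_\sig$-orbit converges to a fixed point, hence by \ref{R1} to a critical point, hence to the origin; and such an orbit eventually lies in $B_\sig$. Therefore $\{\T_0 : \T_k \to 0 \text{ in } \cM_\sig\} \subseteq E_\sig := \{\T_0 : \text{the } \cM\text{-orbit of } \T_0 \text{ ever enters } B_\sig\}$, so
\[ \nu\big(\{\T_k \text{ cycles or diverges in } \cM_\sig\}\big) \;\ge\; 1 - \nu(E_\sig), \]
and it suffices to choose $\sig$ with $\nu(E_\sig) \le \ep$ (divergence to infinity is one of the counted outcomes, as coercivity of $L^1,L^2$ sends both losses to $+\infty$ along an unbounded orbit).

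\textbf{Bounding $\nu(E_\sig)$, and the main obstacle.} The sets $E_\sig$ decrease as $\sig \downarrow 0$ to $E_0 = \{\T_0 : 0 \text{ is an accumulation point of the } \cM\text{-orbit}\}$, so by continuity from above of the finite measure $\nu$ it is enough to make $\nu(E_\sig)$ small, and ultimately to see that $E_0$ is $\nu$-null. The key structural fact is that the origin is a \emph{repeller} of the $\cM$-dynamics for small $\al$: being a strict maximum of $\cM$, the Jacobian $\nabla F(0)$ has all eigenvalues of modulus $>1$ (for GD these are $(1+\al)\mp i\al$; one checks the analogue for each algorithm in $\cA$), so $F$ is an expanding diffeomorphism on a neighbourhood $V$ of $0$. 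Hence a $\cM$-orbit confined to a bounded set $W$ cannot come closer to $0$ than some $\del_0(W) > 0$: inside $V$ the orbit is pushed outward, and its possible entry points into $V$ lie in the compact set $F(\overline W \setminus V)$, which avoids $0$ for $\al$ small. Exhausting $\mathrm{supp}(\nu)$ (bounded) by the sets $\{\T_0 : \cM\text{-orbit} \subseteq B_N\}$ and recalling that orbits escaping to infinity already fall under the divergent outcome, one chooses $\sig$ below the relevant $\del_0$'s to get $\nu(E_\sig) \le \ep$; I would then, as in the other theorems, plot the iterates of each algorithm in $\cA$ on $\cM_\sig$. The hard part will be precisely this last quantification --- converting the qualitative repelling of the origin into a bound $\nu(E_\sig)\le\ep$ that is uniform enough over orbits started in $\mathrm{supp}(\nu)$, which need not be uniformly bounded --- so that the source structure of $\cM$ at $0$ and the harmlessness of escaping orbits must be combined carefully; for a fully general reasonable algorithm (rather than one of the $\cA$ algorithms, whose Jacobians at $0$ are explicit) one may additionally need a center-stable-manifold argument in the style of \citet{Lee} to handle orbits that accumulate at the origin without converging.
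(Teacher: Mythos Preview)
Your structural analysis of $\cM_\sig$ is correct and matches the paper: $\xi_{\cM_\sig} = \xi_\cM$ outside $B_\sig$, the origin is the unique critical point and a nondegenerate strict minimum, and coercivity carries over from $\cM$. Your reduction to bounding $\nu(E_\sig)$ is likewise essentially the paper's: it writes $Z_\del$ for your $E_\del$ restricted to a compact annulus $D = \bar U \cap \{\norm{\T}\ge\sig'\}$, after first splitting off $P_\nu(\T_0\in B_{\sig'}) < \ep/2$.

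The gap is in how you control $\nu(E_\sig)$. You argue that the origin is a \emph{repeller} of the $\cM$-dynamics by computing Jacobian eigenvalues, which you can do for GD and the algorithms in $\cA$ but not for an arbitrary reasonable $F$. The definition of reasonable imposes nothing on local dynamics near a strict maximum beyond \ref{R2}: the set of initial points \emph{converging} there is Lebesgue-null. A reasonable $F$ need not have the form $\id-\al G$, need not be differentiable at $0$, and the center-stable-manifold route you propose would require smoothness hypotheses well beyond \ref{R1}--\ref{R2}. The paper avoids all of this by applying \ref{R2} directly to $\cM_0$ (whose simultaneous gradient coincides with $\xi_\cM$), where the origin is a strict \emph{maximum}: this yields hyperparameters with $\mu(Z)=0$ for $Z=\{\T_0\in D: F_0^k(\T_0)\to 0\}$, and then continuity-from-above of Lebesgue measure on the decreasing family $(Z_\del)_\del$ gives $\mu(Z_\del)\to\mu\bigl(\bigcap_n Z_{1/n}\bigr)$, which the paper identifies with a null set before taking $\sig\le\min(\sig',\del)$. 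Your instinct that accumulation-without-convergence is the delicate point is sound --- the paper's identification of $\bigcap_n Z_{1/n}$ with a null set passes over exactly this --- but the intended route is through \ref{R2} and a measure-theoretic limit, not through Jacobians and stable manifolds.

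A small side remark: you compare $\cM_\sig$ to $\cM$ rather than to $\cM_0$. Outside $B_\sig$ these share $\xi$ and $H$, but the losses differ by the non-constant terms $y^2/2$ and $-x^2/2$, so your claim that orbits coincide presumes $F$ depends on the game only through $\xi$ and its derivatives. The paper's comparison to $\cM_0$ is cleaner, since there the losses differ only by the constant $\pm\sig^2/2$ outside $B_\sig$.
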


\begin{proof}
We modify the construction from Theorem \ref{th_global} by deforming a small region around the maximum to replace it with a minimum. First let $0 < \sig < 0.1$ and define
\[ f_\sig(\T) = \begin{cases}
(x^2+y^2-\sig^2)/2 & \text{if} \, \norm{\T} \geq \sig \\[5pt]
(y^2-3x^2)(x^2+y^2-\sig^2)/(2\sig^2) & \text{otherwise,}
\end{cases} \]
where $\T = (x,y)$ and $\norm{\T} = \sqrt{x^2+y^2}$ is the standard $L2$-norm. Note that $f_\sig$ is continuous since
\[ \lim_{\norm{\T} \to \sig^+} f_\sig(\T) = 0 = \lim_{\norm{\T} \to \sig^-} f_\sig(\T) \,. \]
Now consider the two-player market $\cM_\sig$ given by
\begin{align*}
L^1 &= x^6/6-x^2+f_\sig+xy+\frac{1}{4}\left(\frac{y^4}{1+x^2}-\frac{x^4}{1+y^2}\right) \\
L^2 &= y^6/6-f_\sig-xy-\frac{1}{4}\left(\frac{y^4}{1+x^2}-\frac{x^4}{1+y^2}\right) \,.
\end{align*}
The resulting losses are continuous but not differentiable; however, they are analytic (in particular smooth) almost everywhere, namely, for all $\T$ not on the circle of radius $\sig$. This is sufficient for the purposes of gradient-based optimization, noting that neural nets also fail to be everywhere-differentiable in the presence of rectified linear units.

We claim that $\cM_\sig$ has a single critical point at the origin $\bT = 0$. First note that
\[ \xi_{\cM_\sig} = \xi_{\cM_0} = \begin{pmatrix}
x^5-x+y-\frac{y^4x}{2(1+x^2)^2}-\frac{x^3}{1+y^2} \\[10pt]
y^5-y-x-\frac{x^4y}{2(1+y^2)^2}-\frac{y^3}{1+x^2}
\end{pmatrix} = \xi_{\cM} \]
for all $\norm{\T} \geq \sig$, where $\cM$ is the game from Theorem \ref{th_global}. It was proved there that the only real solution to $\xi = 0$ is the origin, which does not satisfy $\norm{\T} \geq \sig$. Any critical point must therefore satisfy $\norm{\T} < \sig$, for which
\[ \xi = \xi_{\cM_\sig} = \begin{pmatrix}
x^5+x+y-2x(3x^2+y^2)/\sig^2-\frac{y^4x}{2(1+x^2)^2}-\frac{x^3}{1+y^2} \\[10pt]
y^5+y-x-2y(y^2-x^2)/\sig^2-\frac{x^4y}{2(1+y^2)^2}-\frac{y^3}{1+x^2}
\end{pmatrix} \,. \]
First note that $\bT = 0$ is a critical point; we prove that there are no others. The continuous parameter $\sig$ prevents us from using a formal verification system, so we must work `by hand'. Warning: the proof is a long inelegant string of case-by-case inequalities.

Assume for contradiction that $\xi = 0$ with $\T \neq 0$. First note that $\norm{\T} < \sig$ implies $|x|, |y| < \sig$, and $x=0$ or $y=0$ implies $x=y=0$ using $\xi_1=0$ or $\xi_2=0$ respectively. We can therefore assume $0 < |x|, |y| < \sig$. We can moreover assume that $x > 0$, the opposite case following by the quadrant change of variables $(x',y') = (-x,-y)$.

\paragraph{1.} We begin with the case $\sig/2 \leq x < \sig$. First notice that
\[ x+y-2x(3x^2+y^2)/\sig^2 = x(1-6x^2/\sig^2)+y(1-2xy/\sig^2) \leq x(1-3/2)+y(1-y/\sig) \]
and the rightmost term attains its maximum value for $y = \sig/2$, hence
\[ x+y-2x(3x^2+y^2)/\sig^2 \leq -x/2+\sig/4 \leq 0 \,. \]
This implies
\begin{align*}
\xi_1 &\leq x^5-\frac{y^4x}{2(1+x^2)^2}-\frac{x^3}{1+y^2} < x^5-\frac{x^3}{1+y^2} < x^3\left(1-y^2-\frac{1}{1+y^2}\right) = \frac{-x^3y^4}{1+y^2} < 0
\end{align*}
using $x^2+y^2 < 1$, which is a contradiction to $\xi = 0$.

\paragraph{2.} We proceed with the case $x < \sig/2$ and $|y| \leq \sig/2$. First, $y < 0$ implies the contradiction
\[ \xi_2 < y-2y^3/\sig^2-\frac{x^4y}{2(1+y^2)^2}-\frac{y^3}{1+x^2} < y/2-y\left(\frac{\sig^4}{2^5}+\frac{\sig^2}{2^2} \right) < y\left(\frac{1}{2}-\frac{1}{2^5}-\frac{1}{2^2}\right) < 0 \,, \]
so we can assume $y > 0$. In particular we have $(1-2y(y+x)/\sig^2) > 0$. If $y \leq x$, we also obtain
\[ \xi_2 < y^5+(y-x)\left(1-2y(y+x)/\sig^2\right)-\frac{y^3}{1+x^2} < y^3\left(y^2-\frac{1}{1+x^2}\right) < \frac{-y^3x^4}{1+x^2} < 0 \,, \]
so we can assume $x < y$. There are again two cases to distinguish. If $x < \sig/2-b\sig^2$ with $b = 0.08$,
\[ x(1-6x^2/\sig^2)+y(1-2xy/\sig^2) > x(1-3(1/2-\sig b))+x(1-(1/2-\sig b)) > 4\sig bx \]
which implies the contradiction
\[ \xi_1 > 4\sig bx-\frac{y^4x}{2(1+x^2)^2}-\frac{x^3}{1+y^2} > \sig x\left(4b-\frac{\sig^4}{2^5}-\frac{\sig^2}{2^2}\right) > \sig x\left(4b-\frac{1}{2^5}-\frac{1}{2^2}\right) > 0 \,. \]
Finally assume $x \geq \sig/2-b\sig^2$. Then we have
\[ (y-x)(1-2y(x+y)/\sig^2) < b\sig^2(1-4x^2/\sig^2) < b\sig^2(1-(1-2\sig b)^2) = 4\sig^3 b^2(1-\sig b) < 4\sig^3b^2 \]
and obtain
\[ \xi_2 < y^5+4\sig^3b^2-\frac{y^3}{1+x^2} < \sig^3\left(\sig^2/2^5+4b^2-\frac{(1/2-\sig b)^3}{1+\sig^2/4}\right) \,. \]
We claim that the rightmost term is negative. Indeed, the quantity inside the brackets has derivative
\[ \sig/2^4+\frac{(1/2-\sig b)^2}{(1+\sig^2/4)^2}\left(3b(1+\sig^2/4)+\sig(1/2-\sig b)/2\right) > 0 \]
and so its supremum across $\sig \in [0,0.1]$ must be attained at $\sig = 0.1$. We obtain the contradiction
\[ \xi_2 < \sig^3\left(0.01/2^5+4b^2-\frac{(1/2-b)^3}{1+0.01/4}\right) < 0 \]
for $b = 0.08$ and $\sig > 0$, as required.

\paragraph{3.} Finally, consider the case $x < \sig/2$ and $|y| > \sig/2$. First, $y < 0$ implies the contradiction
\[ \xi_1 < x+y-2x(3x^2+y^2)/\sig^2 < -2x(3x^2+y^2) < 0 \]
so we can assume $y > 0$. Now assume $y < \sig-x(1+\sig^2)$. Then
\[ x(1-6x^2/\sig^2)+y(1-2xy/\sig^2) > -x/2+y(1-y/\sig) > -x/2+x(1+\sig^2) > x(1/2+\sig^2) \,, \]
which yields the contradiction
\[ \xi_1 > x\left(\frac{1}{2}+\sig^2-\frac{y^4}{2(1+x^2)^2}-\frac{x^2}{1+y^2}\right) > x\left(1/2+\sig^2-\sig^4-\sig^2/4\right) > x(1/2-1/4) > 0 \,. \]
We can therefore assume $y \geq \sig-x(1+\sig^2)$. We have
\[ (y-x)(1-2y(y+x)/\sig^2) < (y-x)(1-(y+x)/\sig) \leq (y-x)(1-(1-\sig x)) < \sig x(y-x) \]
which attains its maximum in $x$ at $x=y/2$, hence
\[ \xi_2 < y^{5}-\frac{y^{3}}{1+x^{2}}+\frac{\sig y^2}{4} < \frac{\sig y^2}{4}\left(4\sig^2-\frac{2}{1+\sig^2}+4\right) \,. \]
Finally we obtain the contradiction
\[ \xi_2 < \frac{\sig y^2}{4}\left(\frac{5\sig^2+4\sig^4-1}{1+\sig^2}\right) < 0 \]
for all $\sig < 0.1$. All cases lead to contradictions, so we conclude that $\bT$ is the only critical point, with positive definite Hessian
\[ H(\bT) = \begin{pmatrix}
1 & 1 \\ -1 & 1
\end{pmatrix} \succ 0 \,, \]
hence $\bT$ is a strict minimum. Now notice that $\cM_0$ has the same dominant terms as $\cM$ from Theorem \ref{th_global}, so coercivity of $\cM_0$ follows from the same argument. Since $\cM_\sig$ is identical to $\cM_0$ outside the $\sig$-ball $B_\sig = \{ (x,y) \in \R^2 \mid \norm{\T} < \sig \}$, coercivity of $\cM_0$ implies coercivity of $\cM_\sig$ for any $\sig$.

Fix any reasonable algorithm $F$, any bounded continuous measure $\nu$ on $\R^d$ with initial region $U$, and any $\ep > 0$. We abuse notation somewhat and write $F^k_\sig(\T_0)$ for the $k$th iterate of $F$ in $\cM_\sig$ with initial parameters $\T_0$. We claim that there exists $\sig > 0$ such that
\[ P_\nu\left(\T_0 \in U \ \text{and} \ \lim_k F^k_\sig(\T_0) = \bT  \right) < \ep \,. \]
Since $\bT$ is the only critical point and $\cM_\sig$ is coercive, this implies bounded but non-convergent iterates or divergent iterates with infinite losses with probability at least $1-\ep$, proving the theorem. To begin, $\mu(B_\sig) \to 0$ as $\sig \to 0$ implies that we can pick $\sig' > 0$ such that $P_\nu(\T_0 \in B_{\sig'}) < \ep/2$ by continuity of $\nu$ with respect to Lebesgue measure.

\begin{figure}[t]
\centering
\includegraphics[width=\textwidth]{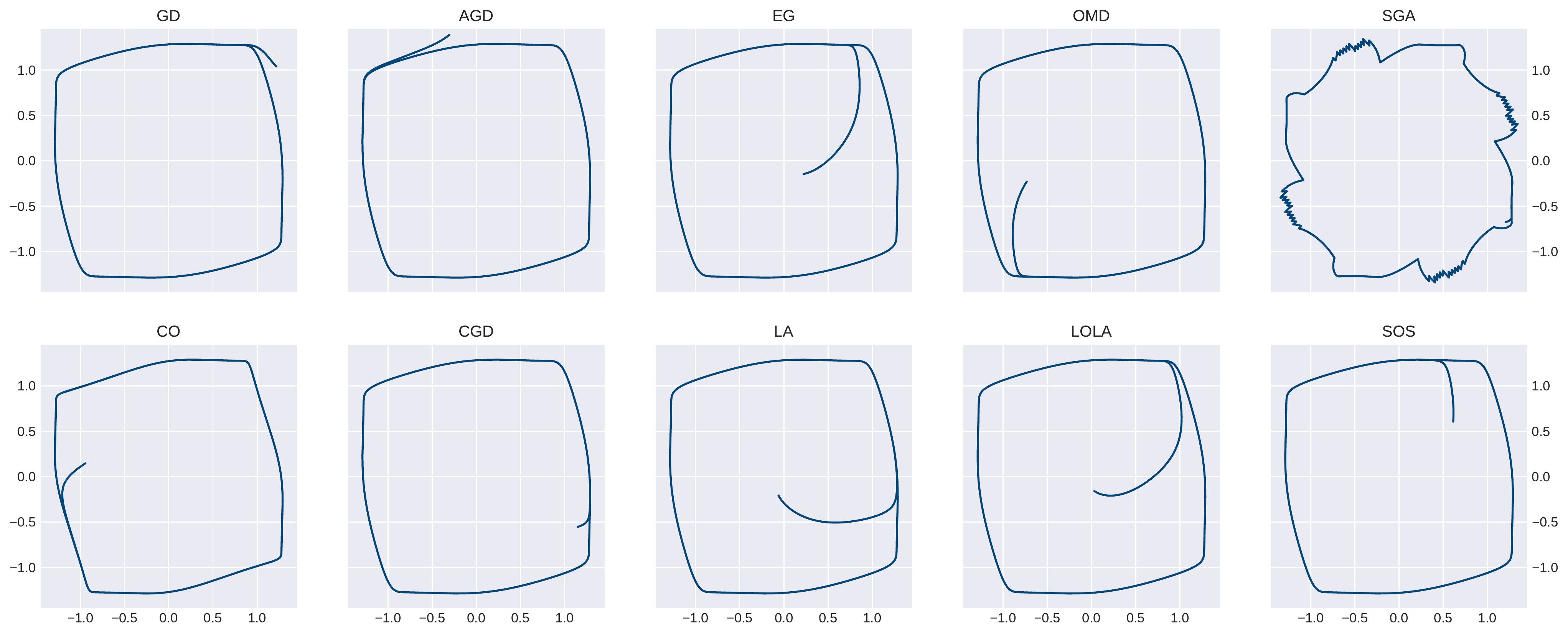}
\caption{Algorithms in $\cA$ fail to converge in $\cM_\sig$ with $\sig = \al = \ga = 0.01$. Single run with standard normal initialisation, 3000 iterations.}
\label{fig_market_min}
\end{figure}

Now let $\bar{U}$ be the closure of $U$ and define $D = \bar{U} \cap \{ \norm{\T} \geq \sig' \}$. Note that $D$ is compact since $\bar{U}$ is compact and closed subsets of a compact set are compact. $F$ is reasonable, $D$ is bounded and $\bT=0$ is a strict maximum in $\cM_0$, so there are hyperparameters such that the stable set
\[ Z = \{ \T_0 \in D \mid \lim_k F^k_0(\T_0) = 0 \} \]
has zero measure. We claim that
\[ Z_\del \coloneqq \{ \T_0 \in D \mid  \inf_{k\in\N} \norm{F^k_0(\T_0)} < \del \} \]
has arbitrarily small measure as $\del \to 0$. Assume for contradiction that there exists $\al > 0$ such that $\mu(Z_\del) \geq \al$ for all $\del > 0$. Then $Z_\del \subset Z_{\del'}$ and $\mu(Z_\del) \leq \mu(D) < \infty$ for all $\del < \del'$ implies
\[ \mu\left(\bigcap_{n \in \N} Z_{\frac{1}{n}}\right) = \lim_{n \to \infty} \mu\left(Z_{\frac{1}{n}}\right) \geq \al \]
by \citet[Exercise 1.19]{Nel}. On the other hand,
\[ \bigcap_{n \in \N} Z_{\frac{1}{n}} = Z_0 \]
yields the contradiction $0 = \mu(Z_0) \geq \al$.
We conclude that $Z_\del$ has arbitrarily small measure, hence there exists $\del > 0$ such that
\[ P_\nu(\T_0 \in Z_\del) < \ep/2 \]
by continuity of $\nu$. Now let $\sig = \min\{\sig', \del\}$ and notice that
\[ \T_0 \in D \setminus Z_\del \quad \implies \quad \inf_k\norm{F^k_0(\T_0)} \geq \del \geq \sig \quad \implies \quad \inf_k\norm{F^k_\sig(\T_0)} \geq \sig \,, \]
where the last implication holds since $\cM_\sig$ and $\cM_0$ are indistinguishable in $\{ \norm{\T} \geq \sig \}$, so the algorithm must have identical iterates $F^k_\sig(\T_0) = F^k_0(\T_0)$ for all $k$. It follows by contraposition that $\lim_k F^k_\sig(\T_0) = \bT$ implies $\inf_k \norm{F^k_\sig(\T_0)} < \sig$ and so $\T_0 \in Z_\del$ or $\T_0 \notin D$. Finally we obtain
\begin{align*}
P_\nu\left(\T_0 \in U \ \text{and} \ \lim_k F^k_\sig(\T_0) = \bT \right)
&= P_\nu\left(\T_0 \in U \cap Z_\del  \ \text{or} \ \T_0 \in U \setminus D \right) \\
&\leq P_\nu\left(\T_0 \in U\cap Z_\del\right) + P_\nu\left(\T_0 \in U\setminus D \right) \\
&\leq P_\nu\left(\T_0 \in Z_\del\right) + P_\nu\left(\T_0 \in B_{\sig'} \right) \\
&< \ep/2+\ep/2 = \ep
\end{align*}
as required. We plot iterates for a single run of each algorithm in Figure \ref{fig_zerosum} with $\al = \ga = 0.01$.
\end{proof}

\section{Proof of Theorem \ref{prop_zerosum}}\label{app_prop_zerosum}

\begin{theorem}{\ref{prop_zerosum}}
There is a weakly-coercive, nondegenerate, analytic two-player zero-sum game $\cN$ whose only critical point is a strict maximum. Algorithms in $\cA$ almost surely have bounded non-convergent iterates in $\cN$ for $\al, \ga$ sufficiently small.
\end{theorem}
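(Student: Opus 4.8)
The plan is to separate the two assertions: the geometry of $\cN$ (weakly coercive, nondegenerate, analytic, unique critical point which is a strict maximum), and the dynamical claim about $\cA$, which I would reduce to two facts verified algorithm by algorithm. \emph{Geometry of $\cN$.} Writing $\T=(x,y)$, I would first record $\xi=(x^3-x+y,\;y^3-y-x)^\t$ and $H=\nabla\xi=\bigl(\begin{smallmatrix}3x^2-1&1\\-1&3y^2-1\end{smallmatrix}\bigr)$. Solving $\xi_1=0$ gives $y=x-x^3$; substituting into $\xi_2=0$ collapses the system to $x=0$ or $x^8-3x^6+3x^4-2x^2+2=0$. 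With $u=x^2\ge0$ the octic factor equals $u(u-1)^3+(2-u)$, which I would show is strictly positive on $[0,1]$ (there $\lvert u(u-1)^3\rvert\le 27/256$ while $2-u\ge1$), on $[1,2]$ (both summands $\ge0$, never simultaneously zero), and on $[2,\infty)$ (value $2$ at $u=2$, derivative $(u-1)^2(4u-1)-1>0$). Hence $\bT=0$ is the only critical point; $H(\bT)=\bigl(\begin{smallmatrix}-1&1\\-1&-1\end{smallmatrix}\bigr)$ has symmetric part $S(\bT)=-I\prec0$, so $\bT$ is a strict maximum, and its eigenvalues $-1\pm i$ give nondegeneracy. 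Weak coercivity is immediate: for fixed $y$, $L^1=x^4/4-x^2/2+xy+\mathrm{const}\to\infty$ as $\lvert x\rvert\to\infty$, and symmetrically $L^2=-L^1\to\infty$ in $y$.

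\emph{Reduction and boundedness.} Every $F\in\cA$ has the form $F=\id-\al G$ with $G$ continuous and $G(\bT)=0$ (each correction in Appendix~\ref{app_algos} is a linear operator applied to $\xi$, or the additive $\diag(H_o^\t\nabla L)$ term, all vanishing at $\bT$ since $\xi(\bT)=0$). A convergent orbit converges to a fixed point of $F$ by continuity, so it suffices to prove, for $\al,\ga$ below a threshold depending on $\mathrm{supp}(\nu)$: (a) there is a forward-invariant ball $B_{2R}\supseteq\mathrm{supp}(\nu)$ in which $\bT$ is the only fixed point, and (b) $\bT$ is linearly repelling, so its basin has Lebesgue measure zero. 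Boundedness rests on the estimate $\T\cdot\xi(\T)=(x^4-x^2)+(y^4-y^2)>0$ for $\lVert\T\rVert\ge2$, hence $\ge c>0$ on the compact annulus $\{2\le\lVert\T\rVert\le2R\}$, with $\lVert\xi\rVert$ bounded there. For each algorithm I would check $G=\xi+O(\al,\ga)$ uniformly on $B_{2R}$ (for the composition/inverse maps EG, OMD, CGD because $\id-\al\xi$, its proximal inverse, and $(I+\al H_o)^{-1}$ are all $O(\al)$-close to $\id$ on $B_{2R}$, and $G_{\mathrm{AGD}}=\xi+O(\al)$). Then on the annulus $\lVert F(\T)\rVert^2=\lVert\T\rVert^2-2\al\,\T\cdot G+\al^2\lVert G\rVert^2<\lVert\T\rVert^2$ for $\al,\ga$ small, and one step out of $B_R$ lands in $B_{2R}$; choosing $R\ge2$ with $\mathrm{supp}(\nu)\subseteq B_R$ makes $B_{2R}$ forward-invariant.

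\emph{Uniqueness and instability of $\bT$.} For uniqueness of the fixed point in $B_{2R}$: for GD, AGD, SGA, CO, CGD and LA the matrix multiplying $\xi$ in $G$ (namely $I$, the $\mathrm{AGD}$ Jacobian block, $I+\la A^\t$, $I+\ga H^\t$, $(I+\al H_o)^{-1}$, $I-\al H_o$) is invertible on $B_{2R}$, so $G(\T)=0\iff\xi(\T)=0\iff\T=\bT$; for LOLA and SOS, $G=\xi+O(\al)$ is uniformly nonzero off any neighbourhood of $\bT$ and a local diffeomorphism at $\bT$ (invertible linearisation there), so $\bT$ is its only zero for $\al$ small; for EG and OMD, $G(\T)=0$ forces $\T=\al\xi(\T)$, and $\T\mapsto\al\xi(\T)$ maps $B_{2R}$ into itself as a contraction for $\al$ small, so $\bT$ is its only fixed point. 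For instability I would compute $DG(\bT)=H(\bT)+O(\al,\ga)$ in each case ($DG_{\mathrm{EG}}(\bT)=H(\bT)(I-\al H(\bT))$, $DG_{\mathrm{OMD}}(\bT)=2H(\bT)-H(\bT)(I-\al H(\bT))^{-1}$, $DG_{\mathrm{CO}}(\bT)=(I+\ga H(\bT)^\t)H(\bT)$, and so on, using $\xi(\bT)=0$ to kill the product-rule terms). Since $\bT$ is a strict maximum, $H(\bT)\prec0$, hence $\max\la(H(\bT))<0$ by Proposition~\ref{prop_concepts}; so for $\al,\ga$ small every eigenvalue $\rho$ of $DF(\bT)=I-\al\,DG(\bT)$ satisfies $\lvert\rho\rvert\ge\Re\rho=1-\al\,\Re\mu>1$, where $\mu$ is the corresponding eigenvalue of $DG(\bT)$ and $\Re\mu<0$ for $\al,\ga$ small because $DG(\bT)\to H(\bT)$. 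Thus $\bT$ is linearly repelling, and as $DF$ is invertible on $B_{2R}$ for $\al$ small, the set of initial points whose orbit converges to $\bT$ lies in a countable union of positive-codimension submanifolds (equivalently, apply the stable-manifold theorem as in Proposition~\ref{prop_gd}), hence has measure zero. Together with boundedness this shows $\nu$-a.e.\ orbit is bounded and non-convergent.

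\emph{Main obstacle.} The one genuinely non-routine case is SGA, whose $F$ is not even continuous at $\bT$, since $\la=\sign(\langle\xi,H^\t\xi\rangle\langle A^\t\xi,H^\t\xi\rangle)$ jumps among $\{-1,0,1\}$ near the origin, so the stable-manifold argument does not apply directly. Uniqueness of the fixed point still holds because $I+\la A(\bT)^\t$ is invertible for every $\la$ (here $A\equiv\bigl(\begin{smallmatrix}0&1\\-1&0\end{smallmatrix}\bigr)$, with determinant $1+\la^2\ge1$); and $\bT$ is still repelling because in each of the finitely many modes the map is, to leading order, $\T\mapsto(I-\al(I+\la A(\bT)^\t)H(\bT))\T$, and for $\la\in\{-1,0,1\}$ one checks that matrix has smallest singular value $>1$, so orbits cannot remain near $\bT$. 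Beyond SGA, the remaining effort is bookkeeping — verifying for each of the eight algorithms that $G=\xi+O(\al,\ga)$ on compacta, that $DG(\bT)=H(\bT)+O(\al,\ga)$, and that the relevant multiplier matrices are invertible — with particular care for the inverse and two-iterate formulations of EG and OMD.
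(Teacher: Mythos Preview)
Your overall architecture matches the paper's: exhibit $\cN$, verify the geometry, then for each $F\in\cA$ establish boundedness via $\T\cdot G>0$ on an annulus, uniqueness of the fixed point, and local repulsion at $\bT$. Your substitution-and-octic argument for the unique critical point is in fact cleaner than the paper's chain of inequalities, and your instability route via eigenvalue perturbation of $DG(\bT)\to H(\bT)$ (then stable-manifold) is a legitimate alternative to the paper's direct use of $\nabla G(\bT)\prec 0$.

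There is, however, one genuine gap. Your boundedness argument rests on ``$G=\xi+O(\al,\ga)$ uniformly on $B_{2R}$'', and this fails for SGA: $G_{\mathrm{SGA}}=(I+\la A^\t)\xi=\xi+\la A^\t\xi$ with $\la\in\{-1,0,1\}$ determined by a sign, so the correction $\la A^\t\xi$ is \emph{not} $O(\al)$. You flag SGA as the non-routine case only for instability, but boundedness also needs separate treatment. The paper handles this by computing $\T^\t G_{\mathrm{SGA}}=x^4+y^4-x^2-y^2+\la(x^2+y^2+x^3y-xy^3)$ and checking it is $>1$ for $\lVert\T\rVert^2>9$ in both sign cases; you would need something equivalent.

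Two smaller points. First, your claim that for OMD ``$G(\T)=0$ forces $\T=\al\xi(\T)$'' is not right: $G_{\mathrm{OMD}}=0$ reads $2\xi(\T)=\xi((\id-\al\xi)^{-1}(\T))$, which does not reduce that way. The paper instead views OMD as a map on pairs $(\T_k,\T_{k-1})$, where a fixed point immediately gives $\xi=0$; alternatively, your own $G=\xi+O(\al)$-plus-local-diffeomorphism argument (used for LOLA/SOS) would cover OMD. Second, for SGA your singular-value check for $\la=1$ is delicate: the linear part $I-\al\bigl(\begin{smallmatrix}0&2\\-2&0\end{smallmatrix}\bigr)$ has expansion factor only $\sqrt{1+4\al^2}=1+O(\al^2)$, so the repelling neighbourhood shrinks with $\al$ once nonlinear terms are included. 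This still suffices, but the paper avoids the issue altogether by observing that near $\bT$ one has $H^\t\prec 0$ and $AH^\t\succ 0$, hence $\la\le 0$ there and the $\la=1$ mode never occurs; then $J_{\mathrm{SGA}}\prec 0$ directly.
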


\begin{proof}
Consider the analytic zero-sum game $\cN$ given by
\[ L^1 = xy-x^2/2+y^2/2+x^4/4-y^4/4 = -L^2 \]
with simultaneous gradient
\[ \xi = \begin{pmatrix}
y-x+x^3 \\ -x-y+y^3
\end{pmatrix} \]
and Hessian
\[ H = \begin{pmatrix}
-1+3x^2 & 1 \\ -1 & -1+3y^2
\end{pmatrix} \,. \]
We show that the only solution to $\xi = 0$ is the origin. First we can assume $x,y \geq 0$ since any other solution can be obtained by a quadrant variable change (\ref{quadrant}). Now assume for contradiction that $y \neq 0$, then
\[ \xi_2 = 0 = -x-y+y^3 \leq -y+y^3 = y(y^2-1) \]
implies $y \geq 1$ and hence
\[ \xi_1 = 0 = y-x+x^3 \geq 1-x+x^3 = (x+1)(x-1)^2+x^2 > 0 \]
which is a contradiction. It follows that $y=0$ and hence $\xi_2 = 0 = x$ as required. Now the origin has invertible, negative-definite Hessian
\[ H(0) = \begin{pmatrix}
-1 & 1 \\ -1 & -1
\end{pmatrix} \prec 0 \]
so the unique critical point is a strict maximum. The game is nondegenerate since the only critical point has invertible Hessian. The game is weakly-coercive since $L^1(x, \bar{y}) \to \infty$ for any fixed $\bar{y}$ by domination of the $x^4$ term; similarly for $L^2(\bar{x}, y)$ by domination of the $y^4$ term.

\paragraph{Bounded iterates: strategy.} We begin by showing that all algorithms have bounded iterates in $\cN$ for $\al, \ga$ sufficiently small. For each algorithm $F$, our strategy is to show that there exists $r > 0$ such that for any $s > 0$ we have $\norm{F(\T)} < \norm{\T}$ for all $r < \norm{\T} < s$ and $\al, \ga$ sufficiently small. This will be enough to prove bounded iteration upon bounded initialisation. Denote by $B_r$ the ball of radius $r$ centered at the origin.

\paragraph{GD.} We have
\begin{align*}
\T^\t \xi &= x(y-x+x^3)+y(-x-y+y^3) \\
&= x^4-x^2+y^4-y^2 \\
&= (x^2-1)^2+(y^2-1)^2+x^2+y^2-2 > 1
\end{align*}
for all $\norm{\T}^2 = x^2+y^2 > 3$. For any $s > 0$ we obtain
\[ \norm{F(\T)}^2 = \norm{\T-\al \xi}^2 = \norm{\T}^2-2\al \T^\t \xi + \al^2 \norm{\xi}^2 < \norm{\T}-\al\left(2-\al \norm{\xi}^2\right) < \norm{\T}^2 \]
for all $\sqrt{3} < \norm{\T} < s$ and $\al$ sufficiently small, namely $0 < \al < 2/\sup_{\T \in B_s}{\norm{\xi}^2}$.

\paragraph{EG.} For any $s > 0$ and $\sqrt{4} < \norm{\T} < s$ we have
\[ \norm{\T-\al\xi(\T)}^2 > 4-2\al\T^\t\xi > 3 \]
for $\al < 1/\sup_{\T \in B_s} 2\T^\t \xi$. Now using $\T^\t \xi > 1$ for all $\norm{\T}^2 > 3$ by the argument for GD above,
\begin{align*}
\norm{F(\T)}^2 &= \norm{\T}^2-2\al \T^\t \xi(\T-\al\xi(\T)) + \al^2 \norm{\xi(\T-\al\xi(\T))}^2 \\
&= \norm{\T}^2-2\al (\T-\al\xi(\T))^\t \xi(\T-\al\xi(\T)) + O(\al^2) \\
&<\norm{\T}^2-\al\left(2-O(\al)\right) < \norm{\T}^2
\end{align*}
for $\al$ sufficiently small.

\paragraph{AGD.} For any $s > 0$, notice by continuity of $\xi$ that there exists $\del > 0$ such that
\[ \T^\t(\xi_1, \xi_2(\T_1-\al\xi_1, \T_2)) > \T^\t\xi-1/2 \]
for all $\al < \del$ and $\T \in B_s$, since $B_s$ is bounded and $\T_1-\al\xi_1 \to \T_1$ as $\al \to 0$. It follows that
\begin{align*}
\norm{F(\T)}^2 &= \norm{\T}^2-2\al\T^\t(\xi_1, \xi_2(\T_1-\al\xi_1, \T_2))+ O(\al^2) \\
&< \norm{\T}^2-2\al(\T^\t\xi-1/2) + O(\al^2) \\ 
&< \norm{\T}^2-2\al(1-1/2) + O(\al^2) \\ 
&< \norm{\T}^2-\al(1-O(\al)) < \norm{\T}^2
\end{align*}
for all $\sqrt{3} < \norm{\T} < s$ and $\al < \del$ sufficiently small.

\paragraph{OMD.} For any $s > 0$, notice by continuity of $\xi$ that there exists $\del > 0$ such that
\[ \abs{\T^\t(\xi(\T)-\xi((\id-\al\xi)^{-1}(\T))} < 1/2 \]
for all $\al < \del$ and $\T \in B_s$, since $B_s$ is bounded and $(\id-\al\xi)^{-1}(\T) \to \T$ as $\al \to 0$. It follows that
\begin{align*}
\norm{F(\T)}^2 &= \norm{\T}^2-2\al\T^\t\xi-2\al\T^\t(\xi(\T)-\xi((\id-\al\xi)^{-1}(\T)) + O(\al^2) \\
&< \norm{\T}^2-2\al+\al + O(\al^2) \\ 
&= \norm{\T}^2-\al(1-O(\al)) < \norm{\T}^2
\end{align*}
for all $\sqrt{3} < \norm{\T} < s$ and $\al < \del$ sufficiently small.

\paragraph{CO, CGD, LA, LOLA, SOS.} Writing $\nu$ for $\ga$ if $F=F_{CO}$ and $\nu$ for $\al$ otherwise, for each algorithm we have
\[ F(\T) = \T-\al\xi+\al\nu K \]
for some continuous function $K : \R^d \to \R$. For instance, $K = -H^\t \xi$ for CO (see Appendix \ref{app_algos}). We obtain
\begin{align*}
\norm{F(\T)}^2 &= \norm{\T-\al\xi+\al\nu K}^2 \\
&= \norm{\T}^2-2\al \T^\t \xi+2\al\nu \T^\t K-2\al^2\nu \xi^\t K+\al^2\norm{\xi}^2+\al^2\nu^2\norm{K} \\
&= \norm{\T}^2-\al\left(2\T^\t \xi-2\nu \T^\t K+2\al\nu \xi^\t K-\al\norm{\xi}^2-\al\nu^2\norm{K}\right) \,.
\end{align*}
Notice that every term in the brackets contains an $\al$ or $\nu$ except for the first. We have already shown that $\T^\t \xi > 1$ for all $\norm{\T}^2 > 3$ for GD above, hence for any $s>0$ we have
\begin{align*}
\norm{F(\T)}^2 &< \norm{\T}^2-\al\left(2-2\nu\sup_{\T\in B_s}\T^\t K+2\al\nu\inf_{\T\in B_s}\xi^\t K-\al\sup_{\T\in B_s}\norm{\xi}^2-\al\sup_{\T\in B_s}\nu^2\norm{K}\right) \\
&= \norm{\T}^2-\al\left(2-O(\al, \nu)\right) < \norm{\T}^2
\end{align*}
for all $\sqrt{3} < \norm{\T}^2 < s$ and $\al, \nu$ sufficiently small.

\paragraph{SGA.} The situation differs from the above since parameter $\la$ follows an alignment criterion, namely $\la = \sign\left(\langle \xi, H^\t \xi \rangle \langle A^\t \xi, H^\t \xi \rangle\right)$, which cannot be made small. First note that
\begin{align*}
\T^\t G_{SGA} = \T^t \xi + \la \T^\t (A^\t \xi) = x^4+y^4-x^2-y^2+\la(x^2+y^2+x^3y-xy^3) \,.
\end{align*}
If $\la = -1$,
\[ \T^\t G_{SGA} = x^4+y^4-2x^2-2y^2-x^3y+xy^3 \]
and splitting $x^4+y^4$ in two yields
\[ \frac{x^4+y^4}{2}-2x^2-2y^2 = \frac{1}{4}\left[(x^2-y^2)^2+(x^2+y^2)(x^2+y^2-8)\right] > 1 \]
for $\norm{\T}^2 = x^2+y^2 > 9$, while
\[ \frac{x^4+y^4}{2}-x^3y+xy^3 = \frac{1}{2}\left[(-x^2+xy+y^2)^2+x^2y^2 \right] > 0 \]
for $\norm{\T} > 0$. Summing the two yields $\T^\t G_{SGA} > 1$ for $\norm{\T}^2 > 9$ and $\la=-1$. If $\la = 1$,
\begin{align*}
\T^\t G_{SGA} &= x^4+y^4+x^3y-xy^3 \\
&= x^4+y^4-2x^2-2y^2+x^3y-xy^3+2(x^2+y^2) \\
&\geq x^4+y^4-2x^2-2y^2+x^3y-xy^3 > 1
\end{align*}
for $\norm{\T}^2 > 9$ by swapping $x$ and $y$ in the $\la = -1$ case above. We conclude $\T^\t G_{SGA} > 1$ for $\norm{\T}^2 > 9$ regardless of $\la$. For any $s>0$ we obtain
\[ \norm{F(\T)}^2 = \norm{\T}^2-2\al\T^\t G_{SGA} +\al^2\norm{G_{SGA}}^2 < \norm{\T}^2-\al\left(2-\al \norm{G_{SGA}}^2\right) < \norm{\T}^2 \]
for all $3 < \norm{\T} < s$ and $\al<2/\sup_{\T\in B_s}{G_{SGA}}$.

\paragraph{Bounded iterates: conclusion.} Now assume as usual that $\T_0$ is initalised in any bounded region $U$. For each algorithm we have found $r$ such that for any $s > 0$ we have $\norm{F(\T)} < \norm{\T}$ for all $r < \norm{\T} < s$ and $\al, \ga$ sufficiently small. Now pick $r' \geq r$ such that $U \subset B_{r'}$. Define the bounded region
\[ V = \{ \T-tG(\T) \mid t\in[0,1], \T \in B_{r'} \} \,. \]
and pick $s \geq r'$ such that $V \subset B_s$. By the above we have $\norm{F(\T)} < \norm{\T}$ for all $r < \norm{\T} < s$ and $\al, \ga$ sufficiently small. In particular, fix any $\al, \ga < 1$ satisfying this condition. We claim that $F(\T) \in B_s$ for all $\T \in B_s$. Indeed, either $\T \in B_r$ implies $F(\T) = \T-\al G(\T) \in V \subset B_s$ or $\T \notin B_r$ implies $\norm{F(\T)} < \norm{\T} < s$ and so $F(\T) \in B_s$. We conclude that $\T_0 \in U \subset B_s$ implies bounded iterates $\T_k = F^k(\T) \in B_s$ for all $k$.

\paragraph{Non-convergence: strategy.} We show that all methods in $\cA$ have the origin as unique fixed points for $\al, \ga$ sufficiently small. Fixed points of each gradient-based method are given by $G = 0$, where $G$ is given in Appendix \ref{app_algos}, and we moreover show that the Jacobian $\nabla G$ at the origin is negative-definite. Non-convergence will follow from this for $\al$ sufficiently small.

\paragraph{GD.} Fixed points of simultaneous GD correspond by definition to critical points:
\[ G_\text{GD} = \xi = 0 \iff \T = 0 \,. \]
The Jacobian of $G$ at $0$ is
\[ \nabla \xi = H = \begin{pmatrix}
-1 & 1 \\ -1 & -1
\end{pmatrix} \prec 0  \,. \]

\paragraph{AGD.} We have
\[ G_\text{AGD} = 0 \iff \begin{cases}
\xi_1 = 0 \\
\xi_2(\T_1-\al\xi_1, \T_2) = 0
\end{cases} \iff \begin{cases}
\xi_1 = 0 \\
\xi_2 = 0
\end{cases} \iff \xi = 0 \iff \T = 0 \,. \]
Now
\begin{align*}
\xi_2(x-\al\xi_1(x,y), y) &= -(x-\al(y-x+x^3))-y+y^3 \\ &= x(-1-\al)+y(-1+\al)+\al x^3+y^3
\end{align*}
so the Jacobian at the origin is
\[ J_{AGD} = \begin{pmatrix}
-1 & 1 \\ -1-\al & -1+\al
\end{pmatrix} \]
with symmetric part
\[ S_{AGD} = \begin{pmatrix}
-1 & -\al/2 \\ -\al/2 & -1+\al
\end{pmatrix} \]
which has negative trace for all $\al < 2$ and positive determinant
\[ -\al^2/2-\al+1 = -(\al+1)^2/2+3/2 > -9/8+3/2 > 0 \]
for all $\al < 1/2$, which together imply negative eigenvalues and hence $S_{AGD} \prec 0$. Recall that a matrix is negative-definite iff its symmetric part is, hence $J_{AGD} \prec 0$ for all $\al < 1/2$.

\paragraph{EG.} We have
\[ G_\text{EG} = \xi\circ(\id-\al\xi) = 0 \iff \id-\al\xi = 0 \iff \begin{cases}
\begin{aligned}
x-\al(y-x+x^3) &= 0 \\
y-\al(-x-y+y^3) &= 0 \,.
\end{aligned}
\end{cases} \]

We have shown that any bounded initialisation results in bounded iterates for EG for $\al$ sufficiently small. Let $U$ be this bounded region and assume for contradiction that $\id-\al\xi=0$ with $x, y \neq 0$ (noting that $x=0$ implies $y=0$ by the first equation and vice-versa). We can assume $x,y > 0$ since any other solution can be obtained by a quadrant change of variable (\ref{quadrant}). We first prove that $x, y < 1$ for $0 < \al < 1/\sup_{\T \in U} \{ y-x+x^3 \}$. Indeed we have
\[ 0 = \xi_1 > x-\al \sup_{\T \in U} > x-1 \]
hence $x < 1$. A similar derivation holds for $y$, hence $0 < x,y < 1$. But now $x \geq y$ implies
\[ 0 = \xi_1 \geq x-\al(y-y+x^3) = x(1-\al x^2) \geq x(1-\al) > 0 \]
for $\al < 1$ while $x < y$ implies
\[ 0 = \xi_2 \geq y-\al(-x-x+y^3) = y(1-\al y^2) \geq y(1-\al) > 0 \]
and the contradiction is complete, hence $\T = 0$ is the only fixed point of EG. Now
\[ J_{EG} = H(I-\al H) = \begin{pmatrix}
-1 & 1 \\ -1 & -1
\end{pmatrix} \begin{pmatrix}
1+\al & -\al \\ \al & 1+\al
\end{pmatrix} = \begin{pmatrix}
-1 & 1+2\al \\ -1-2\al & -1
\end{pmatrix} \]
with $S_{EG} = -I \prec 0$, hence $J_{EG} \prec 0$ for all $\al$.

\paragraph{OMD.} By \citet[Remark 1.5]{Das3}, fixed points of OMD must satisfy $\xi = 0$ by viewing OMD as mapping pairs $(\T_k, \T_{k-1})$ to pairs $(\T_{k+1}, \T_k)$, hence $\T=0$. Now
\[ J_{OMD} = 2H-H(I-\al H)^{-1} = 2\begin{pmatrix}
-1 & 1 \\ -1 & -1
\end{pmatrix} - \frac{1}{1+2\al+2\al^2}\begin{pmatrix}
-1-2\al & 1 \\ -1 & -1-2\al
\end{pmatrix} \,. \]
Now notice that
\[ \frac{1+2\al}{1+2\al+2\al^2} \leq 1 \]
and so
\[ S_{OMD} = \begin{pmatrix}
-2+\frac{1+2\al}{1+2\al+2\al^2} & 0 \\ 0 & -2+\frac{1+2\al}{1+2\al+2\al^2}
\end{pmatrix} \prec 0 \]
for all $\al$.

\paragraph{CO.} We have
\[ G_\text{CO} = (I+\ga H^\t)\xi = 0 \iff \xi = 0 \iff \T=0 \]
for all $\ga$ since the matrix
\[ (I+\ga H^\t) = \begin{pmatrix}
1-\ga & -\ga \\ \ga & 1-\ga
\end{pmatrix} \]
is always invertible with determinant $(1-\ga)^2+\ga^2 > 0$. Now
\[ J_{CO} = (I+\ga H^T)H = \begin{pmatrix}
1-\ga & -\ga \\ \ga & 1-\ga
\end{pmatrix} \begin{pmatrix}
-1 & 1 \\ -1 & -1
\end{pmatrix}  = \begin{pmatrix}
-1+2\ga & 1 \\ -1 & -1+2\ga
\end{pmatrix} \prec 0 \]
for all $\ga < 1/2$.

\paragraph{SGA.} We have
\[ G_\text{SGA} = (I+\la A^\t)\xi = 0 \iff \xi = 0 \iff \T=0 \]
since antisymmetric $A$ with eigenvalues $ia$, $a \in \R$ implies that $I+\la A^\t$ is always invertible with eigenvalues $1+i\la a \neq 0$. Now recall that $\la$ is given by
\[ \la = \sign\left(\langle \xi, H^\t \xi\rangle \langle A^\t, H^\t \xi \rangle \right) = \sign \left( \xi^\t H^\t \xi \cdot \xi^\t AH^\t \xi \right) \,. \]
We have
\[ H^\t = \begin{pmatrix}
-1+3x^2 & -1 \\ 1 & -1+3y^2
\end{pmatrix} \prec 0 \]
and
\[ AH^\t = \begin{pmatrix}
1 & -1+3y^2 \\ 1-3x^2 & 1
\end{pmatrix} \succ 0 \]
for all $\norm{\T}$ sufficiently small, hence $\xi^\t H^\t \xi \leq 0$ and $\xi^\t AH^\t \xi \geq 0$ and thus
\[ \la = \sign\left(\langle \xi, H^\t \xi\rangle \langle A^\t, H^\t \xi \rangle \right) = \sign \left( \xi^\t H^\t \xi \cdot \xi^\t AH^\t \xi \right) \leq 0 \]
around the origin. Now
\[ J_{SGA} = (I+\la A^\t)H = \begin{pmatrix}
1 & -\la \\ \la & 1
\end{pmatrix} \begin{pmatrix}
-1 & 1 \\ -1 & -1
\end{pmatrix}  = \begin{pmatrix}
-1+\la & 1+\la \\ -1-\la & -1+\la
\end{pmatrix} \prec 0 \]
for all $\la < 1$, which holds in particular for $\la \leq 0$.

\paragraph{CGD.} Note that
\[ H_o = \begin{pmatrix}
0 & 1 \\ -1 & 0
\end{pmatrix} = A \]
is antisymmetric, hence $I+\al H_o$ is always invertible as for SGA and
\[ G_\text{CGD} = (I+\al H_o)^{-1}\xi = 0 \iff \xi = 0 \iff \T=0 \,. \]
Now
\[ J_{CGD} = (I+\al H_o)^{-1}H = \frac{1}{1+\al^2}\begin{pmatrix}
1 & -\al \\ \al & 1
\end{pmatrix} \begin{pmatrix}
-1 & 1 \\ -1 & -1
\end{pmatrix}  = \frac{1}{1+\al^2}\begin{pmatrix}
-1+\al & 1+\al \\ -1-\al & -1+\al
\end{pmatrix} \prec 0 \]
for all $\al < 1$.

\paragraph{LA.} As above,
\[ G_\text{LA} = (I-\al H_o)\xi = 0 \iff \xi = 0 \iff \T=0 \]
since $(I-\al H_o)$ is always invertible. Now
\[ J_{LA} = (I-\al H_o)H = (I-\al A)H = \begin{pmatrix}
-1+\al & 1+\al \\ -1-\al & -1+\al
\end{pmatrix} \prec 0 \]
for all $\al < 1$.

\paragraph{LOLA.} Notice that
\begin{align*}
\diag\left(H_o^\t \nabla L\right) &= \diag\left(\begin{pmatrix}
0 & -1 \\ 1 & 0
\end{pmatrix}\begin{pmatrix}
y-x+x^3 & -y+x-x^3 \\ x+y-y^3 & -x-y+y^3
\end{pmatrix}\right) \\
&= \begin{pmatrix}
-x-y+y^3 \\ -y+x-x^3
\end{pmatrix} = H_o\xi
\end{align*}
and so
\[ G_\text{LOLA} = (I-\al H_o)\xi-\al \diag\left(H_o^\t \nabla L\right) = (I-2\al H_o)\xi \iff \xi = 0 \iff \T=0 \]
as for LA. Similarly, substituting $2\al$ for $\al$ in the derivation for LA yields
\[ J_{LOLA} = (I-2\al H_o)H \prec 0 \]
for all $\al < 1/2$.

\paragraph{SOS.} As for LOLA we have
\[ G_\text{SOS} = (I-\al H_o)\xi-p\al \diag\left(H_o^\t \nabla L\right) = (I-\al(1+p) H_o)\xi \iff \xi = 0 \iff \T=0 \]
for any $\al, p$. Now $p(\bT) = 0$ for fixed points $\bT$ by \citet[Lemma D.7]{Let}, hence
\[ J_{SOS} = J_{LA} = \begin{pmatrix}
-1+\al & 1+\al \\ -1-\al & -1+\al
\end{pmatrix} \prec 0 \]
for all $\al < 1$.

\paragraph{Non-convergence: conclusion.} We conclude that all algorithms in $\cA$ have the origin as unique fixed points, with negative-definite Jacobian, for $\al, \ga$ sufficiently small. If a method converges, it must therefore converge to the origin. We show that this occurs with zero probability. One may invoke the Stable Manifold Theorem from dynamical systems, but there is a more direct proof.

Take any algorithm $F$ in $\cA$ and let $U$ be the initialisation region. We prove that the stable set
\[ Z = \{ \T_0 \in U \mid \lim_k F^k(\T_0) = 0 \} \]
has Lebesgue measure zero for $\al$ sufficiently small. First assume for contradiction that $\T_k \to 0$ with $\T_k \neq 0$ for all $k$. Then
\[ G(\T_k) = G(0)+\nabla G(0)\T_k+O(\norm{\T_k}^2) = \nabla G(\bT)(\T_k)+O(\norm{\T_k}^2) \]
since $G(0) = 0$, and we obtain
\begin{align*}
\norm{\T_{k+1}}^2 &= \norm{\T_k-\al G(\T_k)}^2 \\
&= \norm{\T_k}^2-2\al\T_k^\t G(\T_k)+\al^2 \norm{G(\T_k)}^2 \\
&\geq \norm{\T_k}^2-2\al\T_k^\t \nabla G(0)\T_k+O(\norm{\T_k}^3) > \norm{\T_k}^2
\end{align*}
for all $k$ sufficiently large, since $\nabla G(0) \prec 0$. This is a contradiction to $\T_k \to 0$, so $\T_k \to 0$ implies $\T_k = 0$ for some $k$ and so, writing $F_U : U \to \R^d$ for the restriction of $F$ to $U$,
\[ Z \subset \cup_{k=0}^\infty F_U^{-k}(\{0\}) \,. \]
We claim that $F_U$ is a $C^1$ local diffeomorphism, and a diffeomorphism onto its image. Now $G_U$ is $C^1$ with bounded domain, hence $L$-Lipschitz for some finite $L$. By Lemma \ref{lem_lipschitz}, the eigenvalues of $\nabla G$ in $U$ satisfy $|\la| \leq \norm{\nabla G} \leq L$, hence $\nabla F_U = I-\al \nabla G_U$ has eigenvalues $1-\al \la \geq 1-\al|\la| \geq 1-\al L > 0$. It follows that $\nabla F_U$ is invertible everywhere, so $F_U$ is a local diffeomorphism by the Inverse Function Theorem \cite[Th. 2.11]{Spi}. To prove that $F_U : U \to F(U)$ is a diffeomorphism, it is sufficient to show injectivity of $F_U$. Assume for contradiction that $F_U(\T) = F_U(\T')$ with $\T \neq \T'$. Then by definition,
\[ \T-\T' = \al(G_U(\T')-G_U(\T)) \]
and so
\[ \norm{\T-\T'} = \al\norm{G_U(\T')-G_U(\T)} \leq \al L\norm{\T-\T'} < \norm{\T-\T'} \,, \]
a contradiction. We conclude that $F_U$ is a diffeomorphism onto its image with continuously differentiable inverse $F_U^{-1}$, hence $F_U^{-1}$ is locally Lipschitz and preserves measure zero sets. It follows by induction that $\mu(F_U^{-k}(\{0\})) = 0$ for all $k$, and so
\[ \mu(Z) \leq \mu\left(\cup_{k=0}^\infty F_U^{-k}(\{0\})\right) = 0 \]
since countable unions of measure zero sets have zero measure. Since $\T_0$ follows a continuous distribution $\nu$, we conclude
\[ P_\nu\left( \lim_k F^k(\T_0) = 0 \right) = 0 \]
as required. Since all algorithms were also shown to produce bounded iterates, they almost surely have bounded non-convergent iterates for $\al, \ga$ sufficiently small. The proof is complete; iterates are plotted for a single run of each algorithm in Figure \ref{fig_zerosum} with $\al = \ga = 0.01$.
\end{proof}

\begin{figure}[t]
\centering
\includegraphics[width=\textwidth]{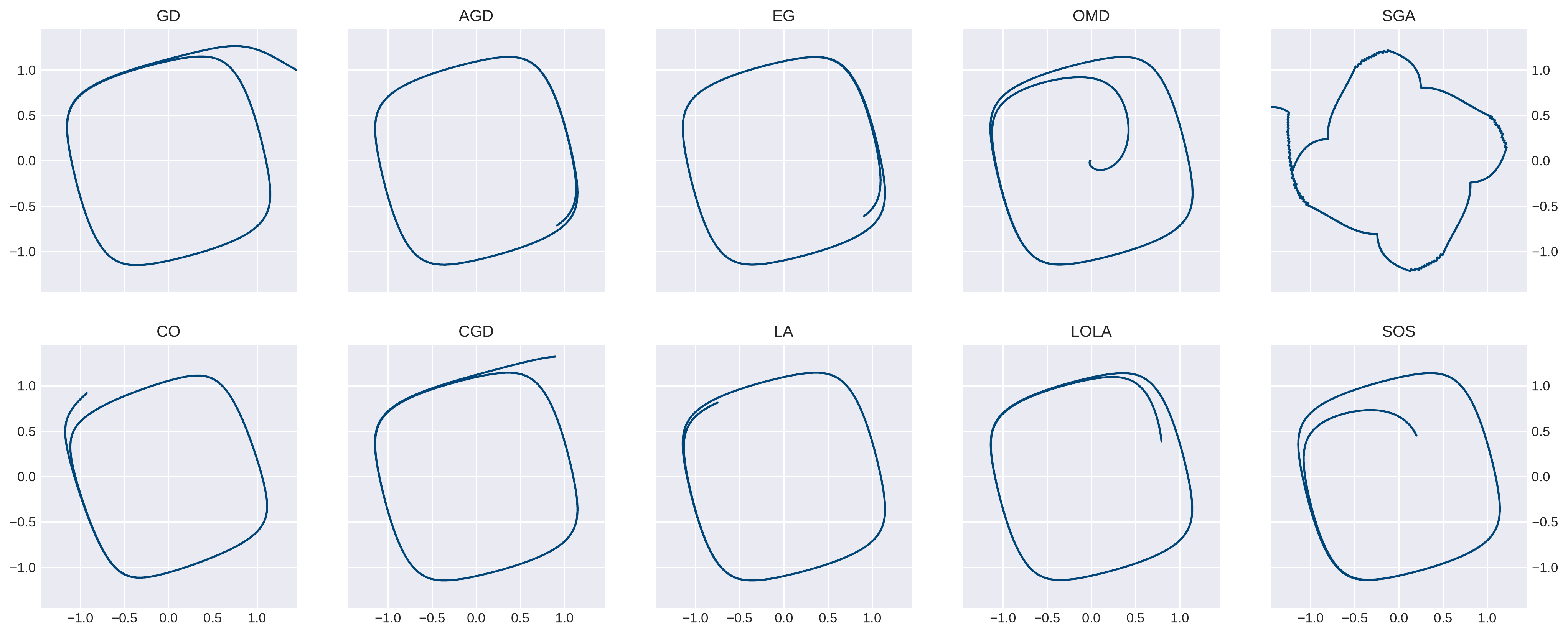}
\caption{Algorithms in $\cA$ fail to converge in $\cN$ with $\al = \ga = 0.01$. Single run with standard normal initialisation, 3000 iterations.}
\label{fig_zerosum}
\end{figure}

\section{Proof of Corollary \ref{cor_progress}}

\begin{corollary}{\ref{cor_progress}}
There are no measures of progress for reasonable algorithms which produce bounded iterates in $\cM$ or $\cN$.
\end{corollary}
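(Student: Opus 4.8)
The plan is to leverage the single structural fact shared by $\cM$ and $\cN$ — a \emph{unique} critical point, which is moreover a strict maximum — against the defining property of a measure of progress. Intuitively, a measure of progress forces bounded iterates to accumulate only at fixed points, \ref{R1} identifies fixed points with critical points, and in $\cM$ (resp. $\cN$) the only critical point is the strict maximum $\bT = 0$; so a measure of progress would force convergence to $\bT$ from every initialisation with bounded iterates, which is exactly what \ref{R2} forbids.

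In detail, suppose $F$ is a reasonable algorithm with bounded iterates in $\cM$ (I handle $\cN$ at the end, the argument being identical with Theorem \ref{prop_zerosum} replacing Theorem \ref{th_global}), and assume for contradiction that $M : \R^d \to \R$ is a measure of progress for $F$. Fix $\T_0$ and write $\T_k = F^k(\T_0)$. Since $M(\T_{k+1}) = M(F(\T_k)) \leq M(\T_k)$ and $M$ is bounded below, the sequence $M(\T_k)$ is non-increasing and converges, say to $M_\infty$. Assuming $(\T_k)$ bounded, it has an accumulation point $\T^*$ along some subsequence $\T_{k_j} \to \T^*$; continuity of $M$ gives $M(\T^*) = M_\infty$, and continuity of $F$ gives $\T_{k_j+1} = F(\T_{k_j}) \to F(\T^*)$, so that $M(F(\T^*)) = \lim_j M(\T_{k_j+1}) = M_\infty = M(\T^*)$. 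The equality clause in the definition of a measure of progress then yields $F(\T^*) = \T^*$, so $\T^*$ is a fixed point; by \ref{R1} it is a critical point; and since $\bT$ is the \emph{only} critical point of $\cM$ (Theorem \ref{th_global}), necessarily $\T^* = \bT$. Thus every bounded orbit of $F$ in $\cM$ has $\bT$ as its unique accumulation point, hence converges to it (a subsequence bounded away from $\bT$ would itself have an accumulation point distinct from $\bT$): $\lim_k \T_k = \bT$ whenever $(\T_k)$ is bounded.

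To finish, fix hyperparameters for which $F$ is reasonable and, as hypothesised, produces bounded iterates in $\cM$, and let $U$ be a ball on which the conclusion of \ref{R2} holds at the strict maximum $\bT$. Every $\T_0 \in U$ has bounded iterates by hypothesis, so by the previous step $\lim_k \T_k = \bT$ for \emph{all} $\T_0 \in U$, i.e. $\mu(\{ \T_0 \in U \mid \lim_k \T_k = \bT \}) = \mu(U) > 0$, contradicting \ref{R2}. Hence no measure of progress for $F$ exists. The game $\cN$ is treated identically: its unique critical point is again a strict maximum (Theorem \ref{prop_zerosum}), and although $\cN$ is only weakly coercive this is irrelevant here, since boundedness of the iterates is assumed outright rather than deduced from coercivity.

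I do not expect a genuine obstacle, as the argument is immediate once Theorems \ref{th_global} and \ref{prop_zerosum} are available. The two points deserving a word of care are (i) passing to the shifted subsequence, which uses continuity of $F$ — a mild property enjoyed by every algorithm in $\cA$, each of the form $F(\T) = \T - \al G(\T)$ with $G$ continuous (OMD after lifting to pairs of iterates) — and (ii) the light hyperparameter bookkeeping ensuring that one choice simultaneously realises \ref{R2} on a fixed positive-measure region and keeps the iterates bounded, which is how I read the phrase ``reasonable algorithm which produces bounded iterates in $\cM$ or $\cN$''.
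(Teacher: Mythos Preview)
Your proposal is correct and follows essentially the same route as the paper's proof: show via the measure of progress that every accumulation point of a bounded orbit is a fixed point, invoke \ref{R1} to identify fixed points with critical points, use uniqueness of the critical point to force convergence to the strict maximum, and contradict \ref{R2}. You are somewhat more explicit than the paper about two points the paper leaves implicit --- the continuity of $F$ needed to pass to the shifted subsequence, and the hyperparameter bookkeeping reconciling bounded iterates with the region $U$ in \ref{R2} --- but these are elaborations rather than a different argument.
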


\begin{proof}
Assume for contradiction that a measure of progress $M$ exists for some reasonable algorithm $F$ and consider the iterates $\T_k$ produced in the game $\cM$ or $\cN$. We prove that the set of accumulation points of $\T_k$ is a subset of critical points, following \citet[Prop. 12.4.2]{Lan}. Consider any accumulation point $\bT = \lim_{m\to\infty} \T_{k_m}$. The sequence $M(\T_k)$ is monotonically decreasing and bounded below, hence convergent. In particular,
\[ \lim_m M(F(\T_{k_m})) = \lim_m M(\T_{k_m+1}) = \lim_m M(\T_{k_m}) \,. \]
By continuity of $M$ and $F$, we obtain
\[ M(F(\bT)) = M(\lim_m F(\T_{k_m})) = \lim_m M(F(\T_{k_m})) = \lim_m M(\T_{k_m}) = M(\bT) \]
and hence $F(\bT) = \bT$. Since $F$ is reasonable, $\bT$ must be a critical point. Now the only critical point of $\cM$ or $\cN$ is the strict maximum $\bT=0$, so any accumulation point of $\T_k$ must be $\bT$. The sequence $\T_k$ is assumed to be bounded, so it must have at least one accumulation point by Bolzano-Weierstrass. A sequence with exactly one accumulation point is convergent, hence $\T_k \to \bT$. This is in contradiction with the algorithm being reasonable.
\end{proof}

\end{document}